\newtheorem{theorem}{Theorem}[section]
\newtheorem{lemma}[theorem]{Lemma}
\theoremstyle{definition}
\newtheorem{definition}[theorem]{Definition}
\theoremstyle{remark}
\newtheorem{remark}[theorem]{Remark}
\numberwithin{equation}{section}
\def\R{\hbox{\bf R}}
\def\N{\hbox{\bf N}}
\def\eps{\varepsilon}
\newcommand{\rf}[1]{(\ref{#1})}
\newcommand\DD{\mathcal{D}}
\newcommand\HH{\mathcal{H}}
\renewcommand{\N}{{\mathbb N}}
\renewcommand{\R}{{\mathbb R}}
\newenvironment{Proofc}[1]{\smallskip\par\noindent\textsc{#1}~}%
  {\hfill$\Box$\bigskip\par}
\newcommand{\Da}{\Lambda^\alpha}
\newcommand{\be}{\begin{equation}} 
\newcommand{\ee}{\end{equation}}
\newcommand{\bea}{\begin{eqnarray}} 
\newcommand{\eea}{\end{eqnarray}}
\newcommand{\dx}{\,{\rm d}x}
\newcommand{\dy}{\,{\rm d}y}
\newcommand{\dz}{\,{\rm d}z}
\newcommand{\dt}{\,{\rm d}t}
\newcommand{\la}{\langle}
\newcommand{\ra}{\rangle} 
\newcommand{\uni}{\rm 1\!\!I}
\begin{document}
 \baselineskip=16pt 

\title[Nonlinear diffusion of dislocation density]{Nonlinear diffusion of dislocation density\\
  and self-similar solutions}

\author{Piotr Biler}
\address{Instytut Matematyczny, Uniwersytet Wroc\l awski,
 pl. Grunwaldzki 2/4, 50-384 Wroc\l aw, Poland}
\email{Piotr.Biler{@}math.uni.wroc.pl}

\author{Grzegorz Karch}
\address{Instytut Matematyczny, Uniwersytet Wroc\l awski,
 pl. Grunwaldzki 2/4, 50-384 Wroc\l aw, Poland}
\email{Grzegorz.Karch{@}math.uni.wroc.pl}
\urladdr{http://www.math.uni.wroc.pl/$\sim$karch}

\author{R\'egis Monneau}
\address{CERMICS, \'Ecole nationale des Ponts et Chauss\'ees, 
6 et 8 avenue Blaise Pascal, Cit\'e Descartes, Champs-sur-Marne,
 77455 Marne-la-Vall\'ee Cedex 2, France}
\email{monneau@cermics.enpc.fr} 
\urladdr{http://cermics.enpc.fr/$\sim$monneau}

\date{\today}

\thanks{The third author is indebted to Cyril Imbert for stimulating and enlightening discussions on the subject of this paper. 
This work was supported by the contract ANR MICA (2006-2009), 
by the European Commission Marie Curie Host Fellowship for the Transfer of Knowledge 
``Harmonic Analysis, Nonlinear Analysis and Probability''  MTKD-CT-2004-013389, 
and by the Polish Ministry of Science grant N201 022 32/0902. 
}

\begin{abstract} 
We study a nonlinear pseudodifferential equation describing the dynamics of dislocations. 
The long time asymptotics of solutions is described by the self-similar profiles. 
\end{abstract}

\keywords{One-dimensional nonlinear pseudodifferential equation, viscosity solutions, self-similar solutions, stability} 

\subjclass[2000]{35K, 35S, 35B40, 70}


\maketitle

\section{Introduction}

In this paper, we study the following initial value problem for the 
nonlinear and nonlocal equation
\begin{eqnarray}\label{eq}
&&u_t=-|u_x| \  \Da u \quad \mbox{on}\quad \R\times (0,+\infty),\\
&&u(x,0)=u_0(x) \quad \mbox{for}\quad x\in \R,\label{ini}
\end{eqnarray}
where the assumptions on the initial datum $u_0$ will be precised later. Here, for $\alpha\in (0,2)$,
$
\Da=\left({\partial^2}/{\partial x^2}\right)^{\alpha/2}
$
is the pseudodifferential operator defined  {\it via} the Fourier
transform
\begin{equation}\label{Da}
\widehat {(\Da w)}(\xi)= |\xi|^\alpha \widehat w(\xi).
\end{equation}

For $\alpha=1$, equation \rf{eq} is an integrated form of a model studied by Head 
\cite{H} for the self-dynamics of a
dislocation density represented by $u_x$. This model is a~mean field model
that has been derived rigorously in \cite{FIM} as the limit of a system of particles in
interactions with forces in $\frac{1}{x}$.
In this model, dislocations can be of two types, $+$ or $-$, depending on the sign of
their Burgers vector (see the book by Hirth and Lothe \cite{HLo} for a physical definition of the Burgers vector). Here, the density $u_x$ means the positive
density $|u_x|$ of dislocations of type of the sign of $u_x$.
Moreover, the occurrence of the absolute value $|u_x|$ in the equation allows the vanishing of
dislocation particles of opposite sign. In the present paper, we study the general case $\alpha \in (0,2)$ that
could be seen as a mean field model of particles with repulsive interactions in
$\frac{1}{x^\alpha}$.

Recall that the operator $\Da$  has the
L\'evy-Khintchine  integral representation for every $\alpha\in (0,2)$
\begin{equation}\label{rep}
-\Da w(x)=C(\alpha)\int_{\R}\left\{w(x+z)-w(x)-zw'(x)\uni_{\left\{|z|\le 1\right\}}\right\}
\frac{\dz}{|z|^{1+\alpha}},
\end{equation}
where $C(\alpha)>0$ is a constant.
This formula (discussed in, {e.g.}, \cite[Th. 1]{DI} for functions $w$ in the
Schwartz space) allows us to extend the definition of $\Da$ to functions which are bounded
and sufficiently smooth, however, not necessarily decaying at infinity.

\subsection{Main results}

First note that equation \rf{eq} is invariant under the scaling
\begin{equation}\label{resc} 
u^\lambda(x,t)=u(\lambda x,\lambda^{\alpha+1}t)
\end{equation} 
for each $\lambda>0$ which means  that if $u=u(x,t)$ is a solution to \rf{eq}, 
then $u^\lambda=u^\lambda(x,t)$ is so.
Hence, our first goal is to construct  self-similar solutions of equation 
\rf{eq}, {i.e.} solutions which are invariant under the scaling 
\rf{resc}. By a standard argument, any self-similar solution should have 
the following form    
\begin{equation}\label{eq::ss}
{u_\alpha(x,t)=\Phi_\alpha(y)\quad \mbox{with}\quad 
 y=\frac{x}{t^{1/(\alpha+1)}}},  
\end{equation} 
where the self-similar profile  $\Phi_\alpha$ 
has to satisfy the following equation
\begin{equation}\label{eq::sse}
-(\alpha+1)^{-1}\  y \ \Phi_\alpha'(y) = - (\Da \Phi_\alpha(y))
\ \Phi_\alpha'(y)   \quad \mbox{for all}\quad y\in \R.  
\end{equation}

In our first theorem, we construct solutions to equation \rf{eq::sse}.

\begin{theorem}[Existence of  self-similar profile]\label{th:m:1}
Let $\alpha\in (0,2)$. There exists a~nondecreasing function $\Phi_\alpha$ 
of  the regularity $C^{1+\alpha/2}$ at each point and  analytic on 
$(-y_\alpha, y_\alpha)$    
 for some $y_\alpha >0$, which satisfies
$$\Phi_\alpha = \left\{\begin{array}{l}
0 \quad \mbox{\rm on}\quad (-\infty ,-y_\alpha),\\
1 \quad \mbox{\rm on}\quad (y_\alpha,+\infty),
\end{array}\right.$$
and
$$
(\Da \Phi_\alpha)(y) = \frac{y}{\alpha +1} \quad\mbox{\rm for all} \quad
y\in (-y_\alpha,y_\alpha).
$$
\end{theorem}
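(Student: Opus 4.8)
The plan is to reduce the problem to the construction of the derivative $\rho_\alpha := \Phi_\alpha'$ and to write this density down explicitly. Since $\Phi_\alpha$ is required to be nondecreasing, to equal $0$ on $(-\infty,-y_\alpha)$ and $1$ on $(y_\alpha,+\infty)$, and to be analytic in between, its derivative $\rho_\alpha$ must be a nonnegative function supported on $[-y_\alpha,y_\alpha]$ with total mass $\int_\R \rho_\alpha = 1$. Differentiating the desired identity $\Da\Phi_\alpha(y)=y/(\alpha+1)$ and using that $\Da$ commutes with $\partial_y$, the density should satisfy
$$\Da \rho_\alpha(y) = \frac{1}{\alpha+1} \qquad \text{for } y\in(-y_\alpha,y_\alpha).$$
So the heart of the matter is to produce a nonnegative, compactly supported solution of this fractional equation with prescribed mass.

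First I would exploit the classical explicit computation (going back to Getoor, and available in the convenient closed form of Dyda) that in one dimension
$$\Da\, \big((1-y^2)_+^{\alpha/2}\big)(y) = \kappa_\alpha \qquad \text{for } y\in(-1,1),$$
where $\kappa_\alpha>0$ is an explicit constant; the point is that the hypergeometric factor degenerates to $1$ precisely when the exponent equals $\alpha/2$. By the scaling $\widehat{\Da w}(\xi)=|\xi|^\alpha\widehat w(\xi)$, which gives $\Da[w(\cdot/y_\alpha)](y)=y_\alpha^{-\alpha}(\Da w)(y/y_\alpha)$, the rescaled function $(y_\alpha^2-y^2)_+^{\alpha/2}$ also has a constant fractional Laplacian on $(-y_\alpha,y_\alpha)$, independent of $y_\alpha$. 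Hence I set
$$\rho_\alpha(y) = c_\alpha\,(y_\alpha^2-y^2)_+^{\alpha/2}, \qquad c_\alpha := \frac{1}{(\alpha+1)\kappa_\alpha},$$
which makes $\Da\rho_\alpha = 1/(\alpha+1)$ hold on the interval for every choice of $y_\alpha$. The single remaining free parameter $y_\alpha$ is then fixed by the mass constraint $\int_{-y_\alpha}^{y_\alpha} c_\alpha (y_\alpha^2-y^2)^{\alpha/2}\,dy = c_\alpha\, B_\alpha\, y_\alpha^{\alpha+1} = 1$, with $B_\alpha=\int_{-1}^1(1-t^2)^{\alpha/2}\,dt$ a Beta integral; this determines $y_\alpha>0$ uniquely.

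Next I would define $\Phi_\alpha(y):=\int_{-\infty}^y \rho_\alpha(t)\,dt$ and check all the claimed properties. By construction it is nondecreasing, equals $0$ for $y\le -y_\alpha$ and $1$ for $y\ge y_\alpha$. Because $\rho_\alpha$ is a positive power of the positive real-analytic function $y_\alpha^2-y^2$ on $(-y_\alpha,y_\alpha)$, it is real-analytic there, hence $\Phi_\alpha$ is analytic on $(-y_\alpha,y_\alpha)$. The regularity $C^{1+\alpha/2}$ at each point reduces to the behaviour at the two endpoints, where $\rho_\alpha(y)\sim \mathrm{const}\,(y_\alpha\mp y)^{\alpha/2}$ is Hölder continuous of order $\alpha/2$, so that $\Phi_\alpha'=\rho_\alpha\in C^{\alpha/2}$ and $\Phi_\alpha\in C^{1+\alpha/2}$. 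Since $\Phi_\alpha$ is bounded and smooth enough, $\Da\Phi_\alpha$ is well defined through the L\'evy--Khintchine representation \rf{rep}, and the commutation $\partial_y(\Da\Phi_\alpha)=\Da\rho_\alpha=1/(\alpha+1)$ on $(-y_\alpha,y_\alpha)$ gives $\Da\Phi_\alpha(y)=y/(\alpha+1)+C$ there. Finally, the evenness of $\rho_\alpha$ forces $\Phi_\alpha-\tfrac12$ to be odd; since a constant has vanishing $\Da$ by \rf{rep} and the even multiplier $|\xi|^\alpha$ preserves parity, $\Da\Phi_\alpha$ is odd, so $C=0$ and the required identity holds.

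The step I expect to be most delicate is not the algebra but the justification that $\Da$ acts as claimed on these non-decaying functions: I must verify via \rf{rep} that $\Da\Phi_\alpha$ is genuinely finite and continuous on $(-y_\alpha,y_\alpha)$ despite $\Phi_\alpha\to 1$ at $+\infty$, and that differentiation commutes with the singular integral there. The only borderline integrability comes from the $\rho_\alpha\sim (y_\alpha\mp y)^{\alpha/2}$ behaviour at the free boundary, which is exactly compensated by the weight $|z|^{-1-\alpha}$ so that the representation converges. Establishing (or citing) the explicit constant $\kappa_\alpha$ is the other nontrivial ingredient, but it is a known closed-form evaluation.
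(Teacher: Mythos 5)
Your proposal is correct and follows essentially the same route as the paper: both rest on Getoor's computation that $\Lambda^\alpha\big((1-|x|^2)_+^{\alpha/2}\big)$ is constant on $(-1,1)$, integrate that density to obtain the profile, use the evenness of the density (equivalently, antisymmetry of the primitive) to kill the integration constant in $\Lambda^\alpha\Phi_\alpha(y)=y/(\alpha+1)+C$, and must justify the action of $\Lambda^\alpha$ on the bounded non-decaying primitive (which the paper does via a duality with test functions, the step you correctly flag as the delicate one). The only difference is bookkeeping: you normalize the scale $y_\alpha$ through the mass constraint on the density, while the paper rescales and shifts the primitive at the end.
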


\begin{remark}
We can obtain the self-similar solutions corresponding to
different boundary values at infinity, simply considering for any $\gamma>0$ 
and $b\in \R$ the profiles
$
{\gamma\Phi_\alpha\left(\gamma^{-1/(\alpha+1)}y\right)+b}
$
which are also solutions of equation \rf{eq::sse}.
\end{remark}

\begin{remark}
The fact that $\partial_y \Phi_\alpha$ has compact support reveals a  finite velocity propagation 
of the support of the solution which is typical for
 solutions the porous 
medium equation, cf. Remark \ref{rem:porous}, below.
\end{remark}

At least formally, the function $\Phi_\alpha$ is the solution of
(\ref{eq::sse}), and the self-similar function $u_\alpha$ given by (\ref{eq::ss}) is a solution of equation \rf{eq}
with the initial datum being the Heaviside function  
\begin{equation}\label{eq::id}
u_0(x)=H(x)=\left\{\begin{array}{ll}
0&\quad \mbox{\rm if}\quad x<0,\\
1&\quad \mbox{\rm if}\quad x>0.
\end{array}\right.   
\end{equation}
In order to check that $u_\alpha$ given by \rf{eq::ss} solves \rf{eq},
we introduce a suitable notion of viscosity solutions  to the initial value problem \rf{eq}--\rf{ini},  see Section \ref{sectvs}. In this setting, we show
in Theorem \ref{th:exis} the existence and the uniqueness of a
solution for any initial condition  $u_0$ in $BUC(\R)$, 
{i.e.}~the space of bounded and uniformly continuous functions on $\R$.
Although, the initial datum \rf{eq::id} is not continuous, 
we have the following result.

\begin{theorem}[Uniqueness of the self-similar solution]\label{th:m:2}
Let $\alpha\in (0,2)$. Then the function $u_\alpha$ defined in \rf{eq::ss}
with the profile 
$\Phi_\alpha$ constructed in Theorem
\ref{th:m:1} 
is the unique viscosity solution of equation \rf{eq} with the 
initial datum \rf{eq::id}.
\end{theorem}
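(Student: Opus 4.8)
The plan is to establish the two halves of the statement separately: that $u_\alpha$ genuinely solves \rf{eq} with datum $H$, and that it is the only such viscosity solution, the latter resting on translation invariance together with the comparison principle that underlies Theorem~\ref{th:exis}. First I would verify that $u_\alpha$ is a viscosity solution for $t>0$ by a direct computation. Writing $y=x\,t^{-1/(\alpha+1)}$, one finds $\partial_t u_\alpha=-\tfrac{1}{\alpha+1}\tfrac{y}{t}\Phi_\alpha'(y)$ and $\partial_x u_\alpha=t^{-1/(\alpha+1)}\Phi_\alpha'(y)$, while the dilation scaling $\Da[g(\lambda\,\cdot)]=\lambda^\alpha(\Da g)(\lambda\,\cdot)$ gives $\Da u_\alpha=t^{-\alpha/(\alpha+1)}(\Da\Phi_\alpha)(y)$. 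Since $\Phi_\alpha$ is nondecreasing, $|\partial_x u_\alpha|=t^{-1/(\alpha+1)}\Phi_\alpha'(y)$, and the equation collapses to the identity
\[
\tfrac{y}{\alpha+1}\,\Phi_\alpha'(y)=\Phi_\alpha'(y)\,(\Da\Phi_\alpha)(y).
\]
This holds on $(-y_\alpha,y_\alpha)$ by Theorem~\ref{th:m:1} and holds trivially outside $[-y_\alpha,y_\alpha]$ because $\Phi_\alpha'$ vanishes there. At the free-boundary points $y=\pm y_\alpha$, where $\Phi_\alpha$ is only $C^{1+\alpha/2}$, I would use $\Phi_\alpha'(\pm y_\alpha)=0$ so that both sides still vanish, and check the sub-/supersolution inequalities against smooth test functions directly, absorbing the loss of a second derivative into the Hölder regularity of $\Phi_\alpha'$ when estimating the nonlocal term. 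The initial trace is then immediate: as $t\to0^+$ one has $y\to\pm\infty$ according to the sign of $x$, so $u_\alpha\to H$ pointwise off the origin, while at $x=0$ the relaxed limits fill $[0,1]$; thus $u_\alpha$ attains $H$ in the semicontinuous-envelope sense, $(u_\alpha)_*(\cdot,0)=H_*$ and $(u_\alpha)^*(\cdot,0)=H^*$.

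For uniqueness, let $v$ be any viscosity solution of \rf{eq} with datum $H$. Because \rf{eq} is invariant under translations in $x$, for each $a>0$ the shifted functions $u_\alpha(x-a,t)$ and $u_\alpha(x+a,t)$ are again viscosity solutions, carrying the Heaviside data $H(\cdot-a)$ and $H(\cdot+a)$. The crucial point is that for $a>0$ the jumps of these data sit at $a$, $0$, $-a$ respectively, so a short inspection of semicontinuous envelopes yields $H^*(\cdot-a)\le H_*(\cdot)$ and $H^*(\cdot)\le H_*(\cdot+a)$ on all of $\R$. Feeding these orderings into the comparison principle behind Theorem~\ref{th:exis}—which compares a USC subsolution with a LSC supersolution once their relaxed initial traces are ordered—I would obtain
\[
u_\alpha(x-a,t)\le v(x,t)\le u_\alpha(x+a,t),\qquad x\in\R,\ t>0,\ a>0.
\]
Since $\Phi_\alpha$ is continuous, $u_\alpha(\cdot\mp a,t)\to u_\alpha(\cdot,t)$ pointwise as $a\to0^+$, and the sandwich forces $v=u_\alpha$.

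The hard part is the interface between the comparison machinery and the discontinuous datum: the uniqueness in Theorem~\ref{th:exis} is tailored to $BUC$ data, whereas $H$ jumps, so a naive comparison between $u_\alpha$ and $v$ is not licensed. The device that circumvents this is precisely the shift $a>0$, which separates the discontinuity of $v$ at the origin from that of the competitor at $\pm a$; only then are the relaxed initial traces genuinely ordered and the comparison principle applicable, after which the limit $a\to0$ recovers the sharp conclusion. A secondary obstacle, already present in the existence step, is the verification of the viscosity inequalities at the free-boundary points $\pm y_\alpha$, where one must reconcile the nonlocal operator $\Da$ with the mere $C^{1+\alpha/2}$ regularity of the profile.
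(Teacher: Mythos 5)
Your proposal is correct and follows essentially the same route as the paper: first verify that $u_\alpha$ is a viscosity solution (the paper does this via Remark \ref{rem::1} and the $C^{1+\alpha/2}$ regularity of $\Phi_\alpha$), then sandwich any other solution between spatial translates of $u_\alpha$, using the shift $a>0$ to separate the jump points so that an intermediate $BUC$ datum exists and the comparison principle of Theorem \ref{th::1} applies, and finally let $a\to 0$. This translation-and-comparison argument is exactly the content of Lemma \ref{lem::104} in the paper.
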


In Theorem \ref{th:m:2}, the uniqueness holds in the sense that if $u$ 
is another viscosity solution to \rf{eq}, \rf{eq::id}, then
$u=u_\alpha$
 on $(\R\times [0,+\infty)) \setminus \{(0,0)\}.$

The self-similar solutions are not only unique, but are also stable in this
framework of viscosity solutions, as the following result shows.

\begin{theorem}[Stability of the self-similar solution]\label{th:m:3}
Let $\alpha\in (0,2)$. For any initial data $u_0\in BUC(\R)$ satisfying
\begin{equation}\label{eq::101}
\lim_{x\to -\infty}u_0(x)=0\quad \mbox{\rm and}\quad \lim_{x\to +\infty}u_0(x)=1,   
\end{equation}
let us consider the unique viscosity solution $u=u(x,t)$ of \rf{eq}--\rf{ini}
 and, for each $\lambda>1$, its rescaled version $u^\lambda =u^\lambda(x,t)$ 
given by equation \rf{resc}.
Then, for any compact set 
$K\subset \left(\R\times [0,+\infty)\right) \setminus
\left\{(0,0)\right\}$, 
we have
\begin{equation}\label{conv:self}
{u^\lambda(x,t) \to \Phi_\alpha\left(\frac{x}{t^{1/(\alpha
        +1)}}\right)} \quad \mbox{\rm in}\quad L^\infty(K) 
\quad \mbox{\rm as}\quad
        \lambda \to +\infty.
\end{equation}
\end{theorem}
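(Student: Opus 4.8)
The plan is to use the method of half-relaxed limits together with the comparison principle underlying the viscosity framework, pinning down the initial data away from the origin by barriers built from the self-similar solutions themselves. First I would record the two structural facts. Since equation \rf{eq} is invariant under translation in $x$, under addition of constants, and under the scaling \rf{resc}, the rescaled function $u^\lambda$ is again a viscosity solution of \rf{eq}, now with initial datum $u_0^\lambda(x)=u_0(\lambda x)$. Moreover constants solve \rf{eq}, so the comparison principle available in the framework of Theorem \ref{th:exis} gives the uniform bound $\inf u_0\le u^\lambda\le\sup u_0$ for all $\lambda>1$. I then introduce the half-relaxed limits
$$\overline{u}(x,t)=\limsup_{\substack{\lambda\to+\infty\\(y,s)\to(x,t)}} u^\lambda(y,s),\qquad \underline{u}(x,t)=\liminf_{\substack{\lambda\to+\infty\\(y,s)\to(x,t)}} u^\lambda(y,s),$$
which are finite by the uniform bound and satisfy $\underline{u}\le\overline{u}$ everywhere. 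By the standard stability of viscosity solutions under half-relaxed limits, $\overline{u}$ is a subsolution and $\underline{u}$ a supersolution of \rf{eq} on $\R\times(0,+\infty)$.

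The core step is to identify the initial traces of $\overline{u}$ and $\underline{u}$ away from the origin, and here the self-similar profile serves as a barrier. By the Remark following Theorem \ref{th:m:1}, for every $\gamma>0$, $b\in\R$ and $c\in\R$ the function
$$v_{b,\gamma,c}(x,t)=b+\gamma\,\Phi_\alpha\!\left(\frac{x-c}{(\gamma t)^{1/(\alpha+1)}}\right)$$
is again a self-similar solution of \rf{eq}; since $\partial_y\Phi_\alpha$ has compact support $[-y_\alpha,y_\alpha]$, at $t=0$ this barrier is the step jumping from $b$ to $b+\gamma$ at $x=c$. Fix $x_0>0$ and $\eps>0$. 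Using \rf{eq::101} choose $X$ with $u_0\ge 1-\eps$ on $[X,+\infty)$, and set $b=\inf u_0\le 0$, $\gamma=1-\eps-\inf u_0>0$, $c=x_0/2$. For $\lambda$ large enough that $\lambda x_0/2\ge X$ one has $v_{b,\gamma,c}(\cdot,0)\le u_0^\lambda$, hence $v_{b,\gamma,c}\le u^\lambda$ for all $t\ge 0$ by comparison. Letting $t\to0^+$ gives $v_{b,\gamma,c}(x_0,t)\to 1-\eps$, so $\underline{u}(x_0,0)\ge 1-\eps$; as $\eps$ is arbitrary, $\underline{u}(x_0,0)\ge 1=H(x_0)$. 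The symmetric constructions (a barrier from above, and the mirror arguments for $x_0<0$) yield $\overline{u}(x,0)\le H(x)\le\underline{u}(x,0)$ for every $x\ne 0$.

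Finally I would invoke the comparison principle that underlies Theorem \ref{th:m:2}. The subsolution $\overline{u}$ and the supersolution $\underline{u}$ both attain the Heaviside datum \rf{eq::id} at $t=0$ for $x\ne0$; comparing each with the self-similar solution $u_\alpha$ of Theorem \ref{th:m:2}, which is continuous on $(\R\times[0,+\infty))\setminus\{(0,0)\}$, forces $\overline{u}\le u_\alpha\le\underline{u}$ on this punctured domain. Combined with $\underline{u}\le\overline{u}$ this gives $\overline{u}=\underline{u}=u_\alpha$ there, and the equality of the two half-relaxed limits is precisely the statement that $u^\lambda\to u_\alpha$ locally uniformly, that is, in $L^\infty(K)$ for every compact $K\subset(\R\times[0,+\infty))\setminus\{(0,0)\}$, which is \rf{conv:self}. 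The delicate point throughout is the behaviour near the origin: the datum \rf{eq::id} is genuinely discontinuous at $x=0$, the half-relaxed limits need not coincide there, and the comparison principle must be applied in a form that tolerates this single-point discontinuity. This is exactly why the barrier argument is carried out only for $x_0\ne0$ and why the conclusion is restricted to compact sets avoiding $(0,0)$; making this comparison rigorous up to the initial time is the main obstacle, and it relies on the same machinery already used for the uniqueness in Theorem \ref{th:m:2}.
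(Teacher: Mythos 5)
Your overall architecture (half-relaxed limits, stability, barriers built from the rescaled profiles $\Phi_\alpha^\gamma$ to pin down the initial trace away from the origin, then comparison with $u_\alpha$) is exactly the paper's strategy, and your Step identifying $\overline{u}(x,0)=\underline{u}(x,0)=H(x)$ for $x\neq 0$ matches the paper's argument. But there is a genuine gap at the last step. To compare $\overline{u}$ (resp.\ $\underline{u}$) with $u_\alpha$ via Lemma \ref{lem::104}, you must know that $\overline{u}$ is a subsolution of the problem with Heaviside datum, i.e.\ $\overline{u}(\cdot,0)\le H^*$ \emph{everywhere}, including at $x=0$ where $H^*(0)=1$. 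The hypothesis \rf{eq::101} only prescribes the limits of $u_0$ at $\pm\infty$; $u_0$ may overshoot, with $\sup u_0>1$ or $\inf u_0<0$. Since $u^\lambda(y,0)=u_0(\lambda y)$, the rescaling concentrates any overshoot at the origin, so $\overline{u}(0,0)\ge\sup u_0$ can exceed $1$ (and $\underline{u}(0,0)$ can be negative), and the comparison you invoke simply does not apply. Your closing remark that the single-point discontinuity is handled by ``the same machinery as in Theorem \ref{th:m:2}'' is where the argument breaks: that machinery (the shift-by-$a$ trick in Lemma \ref{lem::104}) presupposes the bound at $x=0$, which you have not established. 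What you have actually proved is the weaker Theorem \ref{th::110}, valid under the extra confinement assumption $0\le u_0\le 1$.

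The missing ingredient is the content of Section \ref{s5}: one applies Theorem \ref{pro::2} to $u-1-\varepsilon$ (and symmetrically to $-u-\varepsilon$) to show that the overshoot decays like $Ct^{-\beta}$, so that after rescaling $\overline{u}\le 1$ and $\underline{u}\ge 0$ on all of $\R\times(0,+\infty)$. One then replaces $\overline{u}$ by $\min(1,\overline{u})$ and $\underline{u}$ by $\max(0,\underline{u})$; these truncations coincide with the original limits off $(0,0)$, do satisfy the Heaviside bounds at $t=0$ including at the origin, and are still a sub- and a supersolution, so Lemma \ref{lem::104} applies and the conclusion follows. The decay theorem itself is not soft: it is obtained by iterating Lemma \ref{lem:3} along geometric sequences $A_n$, $\gamma_n$, $T_n$, using the compact support of $\Phi_\alpha'$ at each stage. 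You should either add the confinement hypothesis to your statement or supply this decay argument.
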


Theorem \ref{th:m:3} contains a result on  the long time behaviour of 
 solution because, first, choosing $t=1$ in \rf{conv:self} and, next, substituting 
$\lambda =t^{1/(\alpha+1)}$ we obtain  
 the convergence of $u\left(xt^{1/(\alpha+1)}, t\right)$ toward the self-similar profile $\Phi_\alpha(x)$.  

On the other hand,  convergence  \rf{conv:self} can be seen 
as a stability result when we consider initial data which are perturbations
of the Heaviside function. This is a~nonstandard stability result in
the framework of discontinuous viscosity solutions. It shows that the
 approach by viscosity solutions  is a good one in the sense of
Hadamard, even if we consider here initial conditions which 
are perturbations of
the Heaviside function.

Finally, we have the following result of independent interest.

\begin{theorem}[Optimal decay estimates]\label{th:m:4}
Let $\alpha\in (0,1]$. For any initial condition $u_0\in BUC(\R)$ such that $u_{0,x}\in
L^1(\R)$, the unique viscosity solution $u$ of \rf{eq}--\rf{ini}  satisfies
$$
\|u(\cdot,t)\|_\infty \le \|u_{0}\|_\infty\quad\mbox{and}\quad
\|u_x(\cdot,t)\|_\infty \le \|u_{0,x}\|_\infty \quad \mbox{\rm for any}\quad t> 0.$$
Moreover, for every $p\in [1,+\infty)$ we have
\begin{equation}\label{eq::decay}
\|u_x(\cdot,t)\|_p \le C_{p,\alpha}\|u_{0,x}\|_1^{\frac{p\alpha +1}{p(\alpha+1)}} \ t^{-\frac{(p-1)}{p(\alpha+1)}}
\quad \mbox{\rm for  any}\quad t> 0,   
\end{equation}
with some constant $C_{p,\alpha}>0$ depending only on $p$ and $\alpha$.
\end{theorem}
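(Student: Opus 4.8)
The plan is to prove the $L^\infty$ bounds first, since they follow from the structure of the equation and the comparison principle for viscosity solutions (established in Section~\ref{sectvs}), and then obtain the decay estimate \rf{eq::decay} by interpolating a conserved $L^1$ bound against a decaying $L^\infty$ bound on $u_x$. For the maximum principle $\|u(\cdot,t)\|_\i\le\|u_0\|_\i$, I would note that constants solve \rf{eq}, so the constant functions $\pm\|u_0\|_\i$ are sub/supersolutions, and comparison gives the bound. For the gradient bound $\|u_x(\cdot,t)\|_\i\le\|u_{0,x}\|_\i$, the key observation is that \rf{eq} is invariant under spatial translations: if $u(x,t)$ is a solution, so is $u(x+h,t)$ for every $h\in\R$. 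Applying comparison to $u(x+h,t)$ and $u(x,t)$ shows that the spatial modulus of continuity is nonincreasing in time, whence $\|u_x(\cdot,t)\|_\i\le\|u_{0,x}\|_\i$; equivalently one controls $\sup_{x}\,(u(x+h,t)-u(x,t))/h$ by its value at $t=0$.

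The heart of the matter is the $L^p$ decay \rf{eq::decay}. Here I would work with the dislocation density $\rho=u_x$, which (at least formally, and rigorously for smooth approximations) satisfies a conservation law obtained by differentiating \rf{eq} in $x$, namely $\rho_t=-\p_x\!\left(|\rho|\,\Da u\right)$ with $\Da u$ determined nonlocally by $\rho$. The first step is to establish that the total mass $\int_\R u_x\,\dx = \|u_{0,x}\|_1$ is conserved, using the sign condition on $u_{0,x}$ so that $|u_x|=u_x$ (the monotonicity is propagated, consistent with the monotone self-similar profile $\Phi_\alpha$ of Theorem~\ref{th:m:1}); this gives the $L^1$ control $\|u_x(\cdot,t)\|_1=\|u_{0,x}\|_1$. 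The second step is to prove the pure $L^\infty$ decay rate
\begin{equation}\label{eq::linfdecay}
\|u_x(\cdot,t)\|_\i \le C\,\|u_{0,x}\|_1^{1/(\alpha+1)}\,t^{-1/(\alpha+1)},\notag
\end{equation}
which is exactly the case $p=\i$ of \rf{eq::decay}; one checks the exponents are forced by the scaling \rf{resc}, under which $u_x$ scales as $u^\lambda_x(x,t)=\lambda\,u_x(\lambda x,\lambda^{\alpha+1}t)$, so that $\|u_{0,x}\|_1$ is scale-invariant while $\|u_x(\cdot,t)\|_\i$ carries the dimension $t^{-1/(\alpha+1)}$. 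Once both endpoints are in hand, the general exponent follows by the elementary interpolation $\|u_x\|_p\le\|u_x\|_1^{1/p}\,\|u_x\|_\i^{(p-1)/p}$, and substituting the $L^1$ conservation and the $L^\infty$ decay yields precisely the exponents $\frac{p\alpha+1}{p(\alpha+1)}$ on $\|u_{0,x}\|_1$ and $-\frac{p-1}{p(\alpha+1)}$ on $t$ appearing in \rf{eq::decay}.

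The main obstacle is the $L^\infty$ decay of $u_x$, i.e. the smoothing estimate $\|u_x(\cdot,t)\|_\i\lesssim(\|u_{0,x}\|_1/t)^{1/(\alpha+1)}$. The clean way to obtain it is a scaling-and-comparison (barrier) argument: by the $L^1$--invariance and the scaling \rf{resc}, it suffices to bound $\|u_x(\cdot,t)\|_\i$ at a single normalized time, say $t=1$, under the normalization $\|u_{0,x}\|_1=1$; I would construct, using the self-similar profile $\Phi_\alpha$ (whose derivative is bounded and compactly supported by Theorem~\ref{th:m:1}), a family of traveling or stationary barriers that trap the slope of $u$ and force the instantaneous flattening of steep initial data. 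The restriction $\alpha\in(0,1]$ in the hypothesis is the technical signal of where this step is delicate: for $\alpha>1$ the nonlocal term $\Da u$ has order exceeding one and the simple comparison/barrier constructions lose the monotonicity structure that makes the density interpretation and the $L^1$ bound behave well, so controlling $\|u_x\|_\i$ uniformly requires finer estimates that are only guaranteed in the subcritical range $\alpha\le 1$. I therefore expect the bulk of the work to lie in making the barrier construction rigorous at the level of viscosity solutions, after which the conservation of mass and Hölder interpolation are routine.
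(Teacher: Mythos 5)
Your treatment of the two $L^\infty$ bounds (comparison with constants, and translation invariance plus comparison for the Lipschitz bound) is fine and consistent with the paper. The problem is the core of your argument for \rf{eq::decay}: you reduce everything to the endpoint $p=\infty$ smoothing estimate $\|u_x(\cdot,t)\|_\infty\lesssim \|u_{0,x}\|_1^{\theta}\,t^{-1/(\alpha+1)}$ and then interpolate against $L^1$. This endpoint is precisely what the paper says it \emph{cannot} obtain, and indeed warns against: in Remark \ref{rem:6.8} it is observed that the constants $C_{p,\alpha}$ in \rf{Lp:estbis} blow up as $p\to\infty$, that no decay of $\|u_x(\cdot,t)\|_\infty$ beyond \rf{eq:vinfty} should be expected because linear profiles are steady states, and that a plateau of $u_x$ persists for a positive time by finite propagation. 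You offer no proof of this endpoint beyond the hope of ``a family of traveling or stationary barriers that trap the slope''; note that the comparison principle of Theorem \ref{th::1} acts on $u$, not on $v=u_x$, so there is no obvious order-preserving structure at the level of the density that such barriers could exploit. (As a smaller point, your stated exponent $\|u_{0,x}\|_1^{1/(\alpha+1)}$ is not the one forced by the two-parameter scaling of the equation, which gives $\alpha/(\alpha+1)$, i.e.\ the $p\to\infty$ limit of $\tfrac{p\alpha+1}{p(\alpha+1)}$; the scaling \rf{resc} alone leaves this exponent undetermined since it fixes $\|u_{0,x}\|_1$.) The paper instead proves \rf{eq::decay} for each finite $p$ directly, by an entropy method on the regularized equation \rf{eq:e}: multiplying \rf{eq:v} by $|v|^{p-2}v$, using the Stroock--Varopoulos inequality \rf{hyper} to produce the dissipation term $-c_p\|\Lambda^{\alpha/2}|v|^{(p+1)/2}\|_2^2$, and closing with the Nash-type inequality \rf{GN} and the $L^1$ bound \rf{L1:est} to get the differential inequality \rf{diff:in}; this never touches $p=\infty$.

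Two further gaps. First, your $L^1$ step assumes ``the monotonicity is propagated, so $|u_x|=u_x$'' and claims conservation of mass; but Theorem \ref{th:m:4} does not assume $u_0$ monotone, and for sign-changing $u_{0,x}$ the paper only proves the inequality $\|u_x(\cdot,t)\|_1\le\|u_{0,x}\|_1$, via multiplication by $\mathrm{sgn}\,v$, the Kato inequality \rf{sgn est}, and the exact cancellation of the nonlinear term. Second, a viscosity solution has no a priori pointwise derivative, so all these computations must be performed on the smooth solutions of the $\varepsilon$-regularized problem \rf{eq:e}--\rf{ini:e} (whose existence is Theorem \ref{th::10a}) and then transferred to the limit via stability of viscosity solutions, difference quotients, and weak lower semicontinuity of the $L^p$ norm, as in Section \ref{s7}; your proposal does not address this passage, and the restriction $\alpha\le 1$ enters there (one cannot give a distributional meaning to $|u_x|\,\Da u$ for $\alpha>1$, cf.\ Remark \ref{product}), not through a loss of ``monotonicity structure'' in a barrier construction.
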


The decay given in (\ref{eq::decay}) is optimal in the sense that the self-similar
solution satisfies
$\|(u_\alpha)_x(\cdot ,t)\|_p  =  \|(\Phi_\alpha)_y(\cdot)\|_p\
t^{-\frac{(p-1)}{p(\alpha+1)}}.$

\begin{remark}\label{rem:porous}
The equation satisfied by $v=u_x$ of the following form 
\begin{equation}
v_t=(|v|\Lambda^{\alpha-1}\mathcal{H}v)_x \label{eq:v:rem}
\end{equation}
(with the Hilbert transform denoted by $\mathcal{H}$) can be treated as the 
nonlocal counterpart of the porous medium equation. Indeed, for $\alpha=2$ and
for nonnegative $v$, equation \rf{eq:v:rem} reduces to
$
v_t=(vv_x)_x =\left({v^2}/{2}\right)_{xx}.
$
As in the case of the porous medium equation (see {\it e.g.} \cite{JLV}
and the references therein),
 estimates (\ref{eq::decay})  show a regularizing effect
created by the equation, even for the anomalous diffusion:  
if $ v_{0}\in L^1(\R)$ then $ v\in L^p(\R)$ for each $p>1$. 
Observe also that equation \rf{eq:v:rem} has the compactly supported
self-similar solution 
$v(x,t)=t^{-\frac{1}{\alpha+1}}\Phi'_\alpha\left(x/t^{\frac{1}{\alpha+1}}\right)$, 
where the profile $\Phi_\alpha$ was constructed in Theorem \ref{th:m:1}.
This function for $\alpha=2$ corresponds to the well-known Barenblatt-Prattle 
solution of the porous medium equation. 
\end{remark}

\begin{remark}\label{product}
For $\alpha \in (1,2)$, we do not know how to define the product
$|u_x|\ (\Da u)$ in the sense of distributions, which is an obstacle for us
to prove the result of Theorem~\ref{th:m:4}  in this case, see Section \ref{s6}.
Note, however, that the inequalities from Theorem \ref{th:m:4} are valid for 
$\alpha\in (1,2]$ as well, provided the solution $u=u(x,t)$ is sufficiently 
regular.
\end{remark}

\subsection{Organization of the paper}

In Section \ref{s2}, we construct explicitly the self-similar solution. In
Section \ref{sectvs}, we recall the necessary material about viscosity
solutions, which will be used in the remainder of the paper. In Section
\ref{s4}, we prove the uniqueness of the self-similar solution. Under the
additional assumption that the solution is confined between its boundary
values at infinity, we prove the stability of the self-similar solution, namely
Theorem \ref{th:m:3}.
In Section \ref{s5}, we prove further decay properties of a
solution with compact support. Applying these estimates, we finish the proof of Theorem
\ref{th:m:3} in the general case. In Section \ref{s6}, we introduce an
$\varepsilon$-regularized equation, for which we prove both 
the global existence of a smooth solution
and the corresponding gradient estimates.
Finally in
Section \ref{s7}, we deduce the gradient estimate in the
limit case $\varepsilon=0$, namely Theorem \ref{th:m:4}, using the corresponding
estimates for the approximate $\varepsilon$-problem.

\section{Construction of self-similar solutions}\label{s2}

\begin{proof}[Proof of Theorem \ref{th:m:1}]
The crucial role in the construction of the self-similar profile $\Phi_\alpha$ 
is played by the function
\begin{equation}\label{func:v}
v(x)=\left\{
\begin{array}{ccc}
K(\alpha)\left(1-|x|^2\right)^{\alpha/2}& \mbox{for}&|x|< 1,\\
0& \mbox{for}&|x|\ge 1,
\end{array}\right.
\end{equation}
with $K(\alpha)=\Gamma(1/2)\left[2^\alpha\Gamma(1+{\alpha}/{2})
\Gamma((1+\alpha)/{2})\right]^{-1}$.
This function 
(together with its multidimensional counterparts)
has an important 
probabilistic interpretation. 
Indeed, if $\{X(t)\}_{t\geq 0}$ denotes the symmetric $\alpha$-stable 
process in $\R$ 
of order  $\alpha\in (0,2]$ and if  $ T=\inf\{t\,:\,|X(t)|>1\}$ 
is the first passage time of the process to the exterior of the
ball $\{x:\ |x|\leq 1\}$, Getoor \cite{G}
 proved that $\mathbb{E}^x(T)=v(x)$,
where $\mathbb{E}^x$ denotes the expectation under the condition $X(0)=x$.

In particular, it was computed in \cite[Th.~5.2]{G}  using a purely 
analytical argument (based on definition \rf{Da} and on properties of 
the Fourier transform) that
 $\Da v\in L^1(\R)$ and 
\begin{equation}\label{eq:Getoor}
\Da v(x)=1\quad \mbox{for}\quad |x|<1.
\end{equation}

Now, for the function $v$, we  define the bounded, nondecreasing, $C^{1+\alpha/2}$-function
$$u(x)=\int_0^x  v(y) \dy$$
which obviously satisfies
$u(x)=M(\alpha)$ for all $x\geq 1$ and $u(x)=-M(\alpha)$ for $x\leq -1$ with 
$$
M(\alpha)=K(\alpha)\int_0^1\left(1-|y|^2\right)^{\alpha/2} \dy
=
\frac{\pi}{2^\alpha(\alpha+1)\Gamma\left(\frac{1+\alpha}2\right)^2}.
$$

Then, for any $\varphi\in C^\infty_c(\R)$,  we can
introduce  the following duality 
$$\la \Da u,\varphi\ra = \int_{\R} u(y) 
(\Da \varphi)(y)\dy.$$
This defines $\Da u$ as a distribution, because we can check (using the L\'evy-Khintchine formula
(\ref{rep})) that there exists a constant $C>0$ such that
$$\displaystyle{|(\Da \varphi)(x)|\le \frac{C
    \|\varphi\|_{W^{2,\infty}(\R)}}{1+|x|^{1+\alpha}}}.$$
If, moreover,  ${\rm supp}\, \varphi\subset (-1,1)$, 
it is easy to check using the properties of the function $v=v(x)$
that
$$\la \partial_x(\Da u),\varphi\ra
=-\la u, \Da (\partial_x \varphi)\ra= -\la u, \partial_x(\Da \varphi)\ra
=\la \Da (\partial_x u),\varphi\ra=\la 1,\varphi\ra,$$
where the last inequality is a consequence of \rf{eq:Getoor}.
From the symmetry of $v$, we deduce the antisymmetry of $u$, and then 
$(\Da u)(-x)=-(\Da u)(x)$. Therefore, we get the equality
$(\Da u)(x)=x$ in
$ {\DD}'(-1,1)$, however by \cite[Cor. 3.1.5]{Hor}, in the classical sense for each $y\in (-1,1)$, too.

Finally, we define the nonnegative function
$$
\Phi_\alpha(y)=\frac{\gamma}{\alpha+1} \left\{u\left(\gamma^{-1/(\alpha
        +1)}y\right)+M(\alpha)\right\} \quad \mbox{with}\quad 
\gamma^{-1} =\frac{2M(\alpha)}{\alpha+1}.
$$
Now, for $y_\alpha= \gamma^{1/(\alpha+1)}=[2M(\alpha)]^{-1/(\alpha+1)}$, 
we can check easily that $\Phi_\alpha$ is exactly as stated in 
Theorem \ref{th:m:1}, which ends the proof. 
\end{proof}
\bigskip

Let us note that we will not use in the sequel the explicit form of the function
$\Phi_\alpha$, but only its properties listed in Theorem \ref{th:m:1}.

\begin{remark}\label{Head}
{\rm
It is known since the work of Head and Louat \cite{HL} (see also \cite{H}) 
that the function $v(x)=K\left(1-x^2\right)^{1/2}$ (with a suitably chosen 
constant $K=K(1)>0$) is the solution of the
equation $(\Lambda^1 v)(x)=1$ on $(-1,1)$.
This result is a consequence of an inversion theorem due to Muskhelishvili, 
see either \cite[p. 251]{M1} or \cite[Sec. 4.3]{T}.  }
\end{remark}

\section{Notion of viscosity solutions}\label{sectvs}

Here, we consider equation \rf{eq} and its vanishing
viscosity approximation, {i.e.} the following initial value problem 
for $\alpha\in(0,2)$ and $\eta\geq 0$
\begin{eqnarray}
&&u_t=\eta  u_{xx} -|u_x| \ \Da u \quad \mbox{on}\quad \R\times (0,+\infty),\label{eq:eta}\\
&&u(x,0)=u_0(x) \quad \mbox{for}\quad x\in \R. \label{ini:eta}
\end{eqnarray}

In this section, we present the framework of viscosity solutions to 
problem \rf{eq:eta}--\rf{ini:eta}. To
this end, we recall briefly the necessary material, which can be either
found in the literature or is essentially a standard adaptation of
those results. We also refer the reader to Crandall {\it et al.} \cite{CIL} for a classical text on viscosity solutions to local ({i.e.} partial differential) equations.

Let us first recall the definition of  relaxed {\em lower
semi-continuous} (lsc, for short) and {\em upper semi-continuous} (usc, for
short) {\em limits} of a family of functions $u^\eps$ which is locally
bounded uniformly with respect to $\eps$ 
\begin{equation*}
\limsup_{\varepsilon\to 0}{}^* u^\eps (x,t) = 
\limsup_{\substack{\eps \to 0 \\  y \to x,  s \to t}}  u^\eps (y,s)   \;\;\; \mbox{and} \;\;\;  
\liminf_{\varepsilon\to 0}{}_* u^\eps (x,t) = 
\liminf_{\substack{\eps \to 0 \\  y \to x, s \to t}}  u^\eps (y,s).
\end{equation*}
If the family consists of a single element, we recognize the usc envelope
and the lsc envelope of a locally bounded function $u$ 
$$
u^* (x,t) = \limsup_{y \to x, s \to t}  u (y,s)  \quad \mbox{ and } \quad   u_* (x,t) =
\liminf_{y \to x,s \to t}  u (y,s). 
$$

Now, we recall the definition of a {\em viscosity solution} for \rf{eq:eta}--\rf{ini:eta}.
Here, the difficulty is caused by the measure $|z|^{-1-\alpha}\dz$ appearing in the
L\'evy-Khintchine formula (\ref{rep}) which  is singular at the origin and, consequently,
the function has to be at least $C^{1,1}$ in space in order that $\Da
u(\cdot,t)$ makes sense (especially for $\alpha$ close to $2$). 
We refer the reader, for instance, to \cite{sayah91a, BI, JK2} for the
stationary case, and to \cite{JK1, IMR} for the evolution equation where this 
question is discussed in detail.

Now, we are in a position to define viscosity solutions.

\begin{definition}[Viscosity solution/subsolution/supersolution]
A bounded usc  (resp.~lsc) function  $u : 
\R \times \R^+ \to \R$ is a \emph{viscosity subsolution} 
(resp.~\emph{supersolution}) of equation \eqref{eq:eta} 
on $\R\times (0,+\infty)$ if for any point
$(x_0,t_0)$ with  $t_0>0$, any $\tau \in (0,t_0)$, and any test function $\phi$ belonging to
$C^2(\R\times (0,+\infty)) \cap L^{\infty}(\R\times (0,+\infty))$ such
that $u-\phi$ attains a maximum (resp. minimum) at the point $(x_0,t_0)$ on the
cylinder  
$$Q_{\tau}(x_0,t_0):= \R \times  (t_0-\tau,t_0+\tau),$$ 
 we have
$$
\partial_t \phi (x_0,t_0) -\eta  \phi_{xx} (x_0,t_0) +  
|\phi_x
(x_0,t_0)|\; (\Da \phi (\cdot,t_0))(x_0)  \le 0
\quad \mbox{\rm (resp. $\ge 0$)},
$$
where $(\Da \phi (\cdot,t_0))(x_0)$ is given by the L\'evy-Khintchine
formula (\ref{rep}).

We say that $u$ is a \emph{viscosity subsolution} 
(resp. \emph{supersolution}) of problem \eqref{eq:eta}--\rf{ini:eta}
 on $\R\times [0,+\infty)$,
if it satisfies moreover at time $t=0$ 
$$
u(\cdot,0) \le u_0^* \quad \left(\mbox{\rm resp.}\quad u(\cdot,0) \ge (u_0)_*\right).
$$

A function $u: \R \times \R^+ \to \R$ is a \emph{viscosity solution} of 
\eqref{eq:eta} on
$\R\times (0,+\infty)$ (resp. $\R\times [0,+\infty)$) if
 $u^*$ is a viscosity subsolution and $u_*$ is a viscosity
supersolution of the equation on $\R\times (0,+\infty)$ 
(resp. $\R\times [0,+\infty)$).
\end{definition}

Other equivalent definitions are also natural, see for instance \cite{BI}. 

\begin{remark}\label{rem::1}
Any bounded function $u\in C^{1+\beta}$ (with some $\beta>\max\{0,\alpha-1\}$)
which satisfies pointwisely (using the L\'evy-Khintchine formula \rf{rep}) equation (\ref{eq:eta}) with $\eta=0$, is indeed a~viscosity solution.   
\end{remark}

\begin{theorem}[Comparison principle]\label{th::1}
Consider a bounded usc subsolution $u$ and a bounded lsc
supersolution $v$ of \rf{eq:eta}--\rf{ini:eta}. 
If $u(x,0) \le u_0(x) \le v(x,0)$
for some $u_0 \in BUC(\R)$, then $u \le v$ on $\R \times [0,+\infty )$.  
\end{theorem}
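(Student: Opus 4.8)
The plan is to argue by contradiction using the doubling-of-variables technique of Crandall--Ishii--Lions \cite{CIL}, adapted to the nonlocal operator $\Da$ as in \cite{BI, JK1, IMR}. Suppose the conclusion fails; then there is a finite $T>0$ with $\theta:=\sup_{\R\times[0,T]}(u-v)>0$. Because the domain is unbounded in $x$ and we must keep the maximum away from $t=T$, I would study, for small parameters $\eps,\gamma,\beta>0$, the penalized functional
\[
\Phi(x,y,t)=u(x,t)-v(y,t)-\frac{|x-y|^2}{2\eps}-\frac{\gamma}{T-t}-\beta\big(\psi(x)+\psi(y)\big),
\]
where $\psi$ is a smooth coercive weight chosen to grow slowly enough that $\Da\psi$ stays bounded for every $\alpha\in(0,2)$ (for instance $\psi(x)=\log(1+x^2)$). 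Since $u$ and $-v$ are usc and the penalty is coercive while $u,v$ are bounded, $\Phi$ attains its maximum at some $(\bar x,\bar y,\bar t)$, and for the parameters small this maximum is still positive.

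Next I would run the standard penalization analysis. The quadratic term forces $|\bar x-\bar y|^2/\eps\to0$ and $|\bar x-\bar y|\to0$ as $\eps\to0$. The blow-up of $\gamma/(T-t)$ excludes $\bar t=T$, and the initial condition excludes $\bar t=0$: if $\bar t=0$, then $u(\cdot,0)\le u_0\le v(\cdot,0)$ together with the uniform continuity of $u_0\in BUC(\R)$ gives $u(\bar x,0)-v(\bar y,0)\le u_0(\bar x)-u_0(\bar y)\to0$, contradicting positivity of the maximum. Hence $\bar t\in(0,T)$ is interior. Replacing the unbounded quadratic by a function that coincides with it near $\bar x$ (resp. $\bar y$) and is bounded far away (legitimate since $u,v$ are bounded) produces admissible test functions $\phi\in C^2\cap L^\infty$ touching $u$ from above, and $v$ from below, \emph{globally in space} at the respective points, whose $x$-gradients both equal $p:=(\bar x-\bar y)/\eps$ up to an $O(\beta)$ error.

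Writing the subsolution inequality for $u$ and the supersolution inequality for $v$ and subtracting, the time penalty contributes $\gamma/(T-\bar t)^2>0$, while the local terms give $-\eta(X-Y)\ge0$ (with $X\le Y$ because the Hessian of $|x-y|^2/(2\eps)$ vanishes on the diagonal; trivial if $\eta=0$). The heart is the nonlocal term. Since the coupling penalty $|x-y|^2/(2\eps)$ is invariant under the common translation $(x,y)\mapsto(x+z,y+z)$, maximality of $\Phi$ yields, for every $z$,
\[
\big[u(\bar x+z,\bar t)-u(\bar x,\bar t)\big]-\big[v(\bar y+z,\bar t)-v(\bar y,\bar t)\big]\le \beta\big(\psi(\bar x+z)-\psi(\bar x)+\psi(\bar y+z)-\psi(\bar y)\big).
\]
Because the two gradients agree, integrating this against the positive, translation-invariant measure $|z|^{-1-\alpha}\dz$ of the representation \rf{rep} gives the nonlocal monotonicity (the $\Da$-value of $u$ at $\bar x$ dominates that of $v$ at $\bar y$) up to an error $\beta\,\Da\psi$ that vanishes as $\beta\to0$. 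As the common coefficient $|p|\ge0$ is nonnegative, this nonlocal difference enters the subtracted inequality with the favorable sign, and collecting the three contributions forces $0<\gamma/(T-\bar t)^2\le0$ after letting $\beta\to0$ and then $\eps\to0$: the desired contradiction.

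The genuinely delicate point, which I expect to be the main obstacle, is producing the two viscosity inequalities \emph{simultaneously} with the nonlocal terms controlled: the natural test functions are unbounded, and the singular part $\int_{|z|<\delta}$ of $\Da$ cannot be absorbed by the values of $u,v$ alone once $\alpha\ge1$. I would resolve this with the integro-differential version of Jensen--Ishii's lemma (as in \cite{BI, JK1}), splitting $\Da$ at a radius $\delta$: the singular inner part is handled through the second-order jets, where the diagonal degeneracy of the penalty Hessian makes the paired inner contributions cancel to leading order (of size $O(\delta^{2-\alpha})$), while the outer part is governed by the increment comparison above. Getting the order of limits right ($\beta\to0$, then $\delta\to0$, then $\eps\to0$) and controlling the localization error $\beta\,\Da\psi$ uniformly is where the technical care concentrates.
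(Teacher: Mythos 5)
Your proposal is correct in outline and follows essentially the same route as the paper, which does not write out the argument but simply invokes the doubling-of-variables comparison proofs of the cited references (adapting \cite[Th.~5]{IMR}, or \cite{JK2}, \cite{BI}), observing only that the Lipschitz hypothesis on $u_0$ there is used solely to control the gap between sup- and inf-convolutions and that $u_0\in BUC(\R)$ suffices for this. Your direct treatment of the initial layer via the modulus of continuity of $u_0$, and your handling of the nonlocal term by translation-invariance of the coupling penalty plus the $\delta$-splitting of $\Da$, are exactly the adaptations the paper leaves to the reader.
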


\begin{proof}
Recall that in \cite[Th. 5]{IMR}, the comparison principle is proved
for $\alpha=1$ and $\eta =0$ under the additional assumption that $u_0\in  W^{1,\infty} (\R)$. Looking at
the proof of that result, the regularity of the initial data $u_0$ is only used
to show that 
\begin{equation}\label{eq::7}
\sup_{x\in\R} \left((u_0)^\varepsilon(x)-(u_0)_\varepsilon(x)\right) \to 0 \quad
\mbox{as}\quad \varepsilon \to 0,   
\end{equation}
where $(u_0)^\varepsilon$ and $(u_0)_\varepsilon$ are respectively sup and
inf-convolutions. It is easy (and classical) to check that (\ref{eq::7}) is
still true for $u_0 \in BUC(\R)$.
The general case can be done either considering a variation of the  proof
of \cite{IMR} taking into account the additional Laplace operator, or applying the ``maximum principle'' from
\cite{JK2}, or following, for instance, the lines of \cite{BI}. We skip here
the detail of this adaptation. 
This  finishes  the proof. 
\end{proof}


\begin{theorem}[Stability]\label{th:stab}
Let $\{u^\varepsilon\}_{\varepsilon>0}$ be a sequence of viscosity subsolutions
(resp. supersolutions) of equation (\ref{eq:eta})  which are locally bounded,
uniformly in $\varepsilon$. Then $\overline{u}=\limsup^* u^\varepsilon$
(resp. $\underline{u}=\liminf_* u^\varepsilon$) is a~subsolution
(resp. supersolution) of (\ref{eq:eta}) on $\R\times (0,+\infty)$.
\end{theorem}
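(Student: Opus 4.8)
The plan is to prove the standard relaxed stability theorem for viscosity sub/supersolutions, adapted to handle the nonlocal operator $\Da$ appearing through the L\'evy--Khintchine formula \rf{rep}. I treat the subsolution case; the supersolution case follows by replacing $u^\varepsilon$ with $-u^\varepsilon$ (equivalently, by symmetry of the argument). Let $\overline u=\limsup^*u^\varepsilon$, which is usc and bounded by the uniform local boundedness hypothesis. I must show that at any point $(x_0,t_0)$ with $t_0>0$ where $\overline u-\phi$ attains a maximum on a cylinder $Q_\tau(x_0,t_0)$ for a test function $\phi\in C^2\cap L^\infty$, the subsolution inequality
$$
\p_t\phi(x_0,t_0)-\eta\,\phi_{xx}(x_0,t_0)+|\phi_x(x_0,t_0)|\,(\Da\phi(\cdot,t_0))(x_0)\le 0
$$
holds.

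The first step is the classical reduction: by subtracting a small quadratic penalization from $\phi$ I may assume the maximum of $\overline u-\phi$ is \emph{strict} on $Q_\tau(x_0,t_0)$, and this perturbation can be made to vanish to second order at $(x_0,t_0)$ so that it does not alter the values $\p_t\phi$, $\phi_x$, $\phi_{xx}$ nor the nonlocal term there. Next I invoke the standard lemma on relaxed limits: there exist a subsequence $\varepsilon_k\to 0$ and points $(x_k,t_k)\to(x_0,t_0)$ at which $u^{\varepsilon_k}-\phi$ attains a local maximum over $Q_\tau(x_0,t_0)$, with $u^{\varepsilon_k}(x_k,t_k)\to\overline u(x_0,t_0)$. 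Since each $u^{\varepsilon_k}$ is a subsolution and the maximum is attained against the admissible test function $\phi$, I obtain
$$
\p_t\phi(x_k,t_k)-\eta\,\phi_{xx}(x_k,t_k)+|\phi_x(x_k,t_k)|\,(\Da\phi(\cdot,t_k))(x_k)\le 0.
$$
The local terms $\p_t\phi$, $\phi_{xx}$, and $|\phi_x|$ converge as $k\to\infty$ by the $C^2$ regularity of $\phi$ and the continuity of the absolute value.

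The main obstacle, and the only genuinely nonlocal point, is passing to the limit in the term $(\Da\phi(\cdot,t_k))(x_k)$. I expand it via \rf{rep} as an integral against the singular measure $|z|^{-1-\alpha}\dz$. To justify convergence I split the integral at $|z|=r$. On $|z|>r$ the integrand is controlled uniformly in $z$ by $\|\phi\|_{L^\infty}$-type bounds, and the dependence on $(x_k,t_k)$ is continuous, so this part converges to the corresponding integral at $(x_0,t_0)$ by dominated convergence. On the inner region $|z|\le r$ I use the second-order Taylor remainder bound coming from $\phi\in C^2$: the integrand is dominated by $\tfrac12\sup|\p_{xx}\phi|\,z^2$, giving a contribution bounded by $C\,r^{2-\alpha}$ uniformly in $k$, which is small for $r$ small since $\alpha\in(0,2)$. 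Combining the two estimates yields $(\Da\phi(\cdot,t_k))(x_k)\to(\Da\phi(\cdot,t_0))(x_0)$. Taking $k\to\infty$ in the displayed inequality then produces exactly the subsolution inequality for $\overline u$ at $(x_0,t_0)$, completing the proof.
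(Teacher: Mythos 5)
The paper itself disposes of this theorem by citing \cite[Th.~1]{BI} and calling the time-dependent version a classical adaptation, so you are attempting to supply details the authors omitted. Your outline is the right one, and the part you identify as ``the only genuinely nonlocal point'' --- the continuity of $(x,t)\mapsto(\Da\phi(\cdot,t))(x)$ for $\phi\in C^2\cap L^\infty$, proved by splitting the L\'evy--Khintchine integral at $|z|=r$ --- is handled correctly. The symmetry reduction to the subsolution case is also fine, since the equation is odd under $u\mapsto -u$.

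The genuine gap is in the step you call standard: extracting points $(x_k,t_k)\to(x_0,t_0)$ at which $u^{\varepsilon_k}-\phi$ attains a maximum \emph{over the cylinder} $Q_\tau(x_0,t_0)=\R\times(t_0-\tau,t_0+\tau)$. The classical relaxed-limits lemma produces approximate maximum points only on a \emph{compact} neighbourhood of $(x_0,t_0)$; here the definition of subsolution requires the test function to touch $u^{\varepsilon_k}$ from above globally in space, because the whole function enters the nonlocal term. Since the $u^\varepsilon$ are only \emph{locally} bounded uniformly in $\varepsilon$, the supremum of $u^{\varepsilon_k}-\phi$ over $Q_\tau$ need not be attained, and near-maximizers can escape to spatial infinity (a bump of height $1/\varepsilon$ located at $x=1/\varepsilon$ is compatible with all your hypotheses and invisible to $\limsup^*$ at finite points). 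A maximum over a compact set is not an admissible touching in the sense of the paper's definition, so the subsolution inequality for $u^{\varepsilon_k}$ cannot be invoked at $(x_k,t_k)$ as you do. This is precisely the point where the nonlocal theory departs from \cite{CIL}, and it is the actual content of \cite[Th.~1]{BI}: one first establishes the equivalence with a \emph{mixed} formulation in which $\Da$ is split into a part over $|z|\le\delta$ evaluated on the test function and a part over $|z|>\delta$ evaluated on $u$ itself; with that localized formulation the compact relaxed-limits lemma applies, and the far part of the nonlocal term passes to the limit by Fatou's lemma using the $\limsup^*$ convergence. Without this equivalence (or some substitute confining the maximum points), your argument does not close. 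A minor additional point: the penalization you subtract must itself be bounded (a raw quadratic is not an admissible test-function correction here) and it perturbs $\Da\phi(x_0)$ by a term of one sign, so you need an explicit $\delta\to 0$ limit at the end rather than asserting that the perturbation ``does not alter'' the nonlocal term.
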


\begin{proof}
A counterpart  of Theorem \ref{th:stab} is proved in \cite[Th.1]{BI}.
Here, the result for the time dependent problem is again a classical 
adaptation of that argument, so we skip  details.  
\end{proof}

\begin{remark}\label{remark:plus}
One can  generalize directly Theorem \ref{th:stab} assuming that 
$\{u^\varepsilon\}_{\varepsilon>0}$ are solutions to the sequence of equations
\rf{eq:eta} with $\eta=\varepsilon$. Then, in the limit 
$\varepsilon \to 0^+$, we obtain viscosity subsolutions
(resp. supersolutions) of equation (\ref{eq}). We use this property in 
the proof of Theorem \ref{th:m:4}.
\end{remark}

\begin{remark}\label{initialconditions}
In Theorem \ref{th:stab}, we only claim that the limit $\overline{u}$ is a
supersolution on $\R\times (0,+\infty)$, but not on $\R\times
[0,+\infty)$. In other words, we do not claim that $\overline{u}$ satisfies
the initial condition. Without further properties of the initial data $u^0$, it may happen that 
$\overline{u}(\cdot,0) \le u_0^*$ 
is not true.
\end{remark}


\begin{theorem}[Existence]\label{th:exis}
Consider $u_0 \in BUC(\R)$. Then there exists the unique bounded continuous viscosity solution $u$ of \rf{eq:eta}--\rf{ini:eta}. 
\end{theorem}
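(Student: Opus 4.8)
The plan is to establish existence via \emph{Perron's method} together with the comparison principle (Theorem~\ref{th::1}), and to obtain continuity as a consequence of uniqueness through a standard barrier argument at the initial time. First, I would fix $u_0\in BUC(\R)$ and construct explicit barriers near $t=0$. Since $u_0$ is uniformly continuous, for every $\delta>0$ one can find a modulus such that $u_0(x)-\delta \le u_0(y)$ whenever $|x-y|$ is small, and one builds smooth super- and subsolutions $w^\pm$ of the form $w^\pm(x,t)=u_0(x_0)\pm\bigl(\omega(\delta)+C t\bigr)$ on suitable space cylinders, adapted to the nonlocal operator $\Da$ via the L\'evy--Khintchine representation~\rf{rep}. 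These barriers sandwich any solution and force the correct initial trace; the key point is that the constant $C$ absorbing $|w^\pm_x|\,\Da w^\pm$ can be chosen independent of $\eps$, because $\Da$ applied to a localized smooth bump is uniformly bounded.

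Next, I would run Perron's method. Define
\[
u(x,t)=\sup\bigl\{w(x,t):\ w\ \text{is a subsolution of \rf{eq:eta}--\rf{ini:eta} with }w\le \overline{W}\bigr\},
\]
where $\overline{W}$ is a global supersolution obtained from the barriers above, and the class of admissible subsolutions is bounded below by a global subsolution $\underline{W}$. The comparison principle guarantees $\underline{W}\le u\le \overline{W}$, so $u$ is bounded and pinches to $u_0$ as $t\to 0^+$. The standard Perron argument then shows that $u^*$ is a subsolution and $u_*$ is a supersolution: the first is immediate from the stability result (Theorem~\ref{th:stab}) applied to the sup of subsolutions, and the second follows by the classical bump-construction contradiction, namely if $u_*$ failed to be a supersolution at some point one could add a small localized perturbation to produce a strictly larger admissible subsolution, contradicting maximality. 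The only care needed is that the perturbation must be compatible with the nonlocal term, but since the correction is compactly supported and smooth, its contribution through~\rf{rep} is controlled.

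Finally, comparison upgrades this to continuity and uniqueness. By construction $u_*\le u \le u^*$, while the comparison principle applied to the subsolution $u^*$ and the supersolution $u_*$ (both having initial trace $u_0$) yields $u^*\le u_*$. Hence $u^*=u_*=u$, so $u$ is continuous and is the desired viscosity solution; uniqueness follows directly from Theorem~\ref{th::1} since any two solutions are simultaneously sub- and supersolutions with the same initial data.

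I expect the main obstacle to be the \textbf{initial-layer analysis}, i.e. verifying that $u(\cdot,0)=u_0$ in the viscosity sense rather than merely $u(\cdot,0)\le u_0^*$ and $u(\cdot,0)\ge (u_0)_*$. As Remark~\ref{initialconditions} warns, the stability theorem alone does not preserve the initial condition, so the barrier construction at $t=0$ is doing the real work and must be carried out with the nonlocal operator in mind. Concretely, the delicate estimate is bounding $\Da$ of the localized barrier uniformly, especially for $\alpha$ close to $2$, where the measure $|z|^{-1-\alpha}\dz$ is most singular; here one splits the integral in~\rf{rep} into $|z|\le 1$, controlled by the $C^{1,1}$-bound on the barrier, and $|z|>1$, controlled by the $L^\infty$-bound, exactly as in the decay estimate $|\Da\varphi(x)|\le C\|\varphi\|_{W^{2,\infty}}(1+|x|^{1+\alpha})^{-1}$ recorded in Section~\ref{s2}.
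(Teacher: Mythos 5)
Your proposal is correct in outline, but it takes a genuinely different route from the paper at the one place where the argument has real content, namely how general $BUC$ data are handled. You run Perron's method directly for $u_0\in BUC(\R)$, which forces you to build localized barriers from the modulus of continuity of $u_0$ and to control $\Da$ of these localized bumps uniformly --- the ``initial-layer analysis'' you correctly identify as the delicate step. The paper instead splits the argument in two: for $u_0\in W^{2,\infty}(\R)$ the barriers are simply the global functions $u_\pm(x,t)=u_0(x)\pm Ct$ with $C$ depending only on $\|u_0\|_{W^{2,\infty}}$ (so that $|\partial_x u_0|\,|\Da u_0|$ and $\eta\, u_{0,xx}$ are absorbed trivially), and Perron's method is invoked only in that regular setting, citing the adaptation in \cite{I}; then for $u_0\in BUC(\R)$ one mollifies to get $u_0^\varepsilon\in W^{2,\infty}$ with $|u_0^\varepsilon-u_0|\le\varepsilon$, and uses the invariance of the equation under addition of constants together with the comparison principle to get $|u^\varepsilon-u^\delta|\le\varepsilon+\delta$, so the approximate solutions form a Cauchy sequence in $L^\infty(\R\times[0,+\infty))$; the uniform limit is continuous, is a viscosity solution by stability, and attains the initial datum because $u^\varepsilon(\cdot,0)=u_0^\varepsilon\to u_0$ uniformly. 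The paper's route buys a trivial barrier construction and sidesteps entirely the nonlocal estimate on localized bumps that your approach must carry out (and which is the most technical point for $\alpha$ close to $2$); your route buys a self-contained one-pass Perron argument without the approximation layer, at the price of a more careful verification that the modulus-of-continuity barriers are admissible super- and subsolutions for the L\'evy--Khintchine operator. Both arguments close the loop the same way, via the comparison principle of Theorem~\ref{th::1} for uniqueness and for the identification $u^*=u_*$.
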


\begin{proof}
Applying the argument of \cite{I} (already adapted from the classical
arguments), we can construct a solution by the Perron method, if we are able to construct suitable barriers. 
\medskip

 {\it Case 1}: First, assume that  $u_0 \in W^{2,\infty} (\R)$.
Then the following functions
\begin{equation}\label{eq::200}
u_\pm(x,t)=u_0(x) \pm Ct   
\end{equation}
are barriers for $C>0$ large enough (depending on the norm
$\|u_0\|_{W^{2,\infty}(\R)}$),
and we get the existence of solutions by the Perron method.
\medskip

 {\it Case 2}: Let $u_0 \in BUC(\R)$.
For any $\varepsilon>0$, we can regularize $u_0$ by a convolution, and
get a function $u_0^\varepsilon \in W^{2,\infty}(\R)$ which satisfies, moreover,
\begin{equation}\label{eq::8}
|u_0^\varepsilon -u_0|\le \varepsilon.  
\end{equation}
Let us call $u^\varepsilon$ the solution of \rf{eq:eta}--\rf{ini:eta} with
the initial condition $u^\varepsilon_0$ instead of $u_0$. 
Then, from the fact
that the equation does not see the constants and from the
comparison principle (Theorem \ref{th::1}), we have for any
$\varepsilon,\,\delta >0$
$$|u^\varepsilon-u^\delta|\le \varepsilon +\delta.$$
Therefore, $\{u^\varepsilon\}_{\varepsilon>0}$ is the Cauchy sequence which
converges in $L^\infty(\R\times [0,+\infty))$ to some continuous function
$u$ (because all the functions $u^\varepsilon$ are continuous). By the
stability result (Theorem \ref{th:stab}), we see that $u$ is a viscosity
solution of equation \rf{eq:eta} on $\R\times (0,+\infty)$. To recover the initial boundary condition, we simply remark that
$u^\varepsilon(x,0)=u_0^\varepsilon(x)$ satisfies (\ref{eq::8}), and then
passing to the limit, we get
$u(x,0)=u_0(x).$
This shows that $u$ is a viscosity
solution of problem \rf{eq:eta}--\rf{ini:eta}
 on $\R\times [0,+\infty)$, and ends the proof of
 Theorem \ref{th:exis}. 
\end{proof}

\section{Uniqueness and stability of the self-similar solution}\label{s4}

\begin{lemma}[Comparison with the self-similar solution]\label{lem::104}
Let $v$ be a subsolution (resp.~a supersolution) of equation \rf{eq} with the Heaviside initial
datum given in (\ref{eq::id}). Then we have
$v^*\le  (u_\alpha)^*$ 
(resp. $(u_\alpha)_*\le  v_*$).
\end{lemma}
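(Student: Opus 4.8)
The plan is to reduce the comparison between an arbitrary sub/supersolution and the self-similar solution $u_\alpha$ to the comparison principle already established in Theorem \ref{th::1}, the obstacle being that the latter requires a \emph{continuous} initial datum $u_0\in BUC(\R)$, whereas the Heaviside function $H$ is discontinuous at the origin. I treat the subsolution case; the supersolution case is symmetric. First I would recall that $u_\alpha$, being built from the profile $\Phi_\alpha$ of Theorem \ref{th:m:1} which is $C^{1+\alpha/2}$ and satisfies $(\Da\Phi_\alpha)(y)=y/(\alpha+1)$ on $(-y_\alpha,y_\alpha)$, is indeed a viscosity solution of \rf{eq} away from the origin; by Remark \ref{rem::1} the regularity $C^{1+\alpha/2}$ with $\alpha/2>\max\{0,\alpha-1\}$ suffices for $\alpha\in(0,2)$, so no issue arises there.

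The key device is to approximate $H$ from one side by continuous data and to exploit the invariance of the equation under both the scaling \rf{resc} and the addition of constants (the equation ``does not see the constants,'' as used in the proof of Theorem \ref{th:exis}). Concretely, for the subsolution estimate $v^*\le (u_\alpha)^*$, I would fix $s>0$ and consider the function $(x,t)\mapsto u_\alpha(x,t+s)$, which is a continuous viscosity solution on $\R\times[0,+\infty)$ whose initial trace $u_\alpha(\cdot,s)=\Phi_\alpha(x/s^{1/(\alpha+1)})$ lies in $BUC(\R)$ and dominates $H$ from above, since $\Phi_\alpha$ is nondecreasing, equals $1$ for $x\ge y_\alpha s^{1/(\alpha+1)}$ and $0$ for $x\le -y_\alpha s^{1/(\alpha+1)}$. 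Because $v$ is a subsolution with $v(\cdot,0)\le H^*=H$ and $H\le u_\alpha(\cdot,s)$ pointwise except possibly at the single point $x=0$ where $u_\alpha(0,s)=\Phi_\alpha(0)=1/2$; to absorb that one bad point I would instead compare against a slightly shifted and lifted competitor, using that $\Phi_\alpha(0)=\demi$ together with left/right translates $u_\alpha(x\pm\d,t+s)$ so that the shifted profile dominates $H$ everywhere. Applying Theorem \ref{th::1} to the subsolution $v$ and the supersolution $u_\alpha(\cdot\pm\d,\cdot+s)$ yields $v(x,t)\le u_\alpha(x\pm\d,t+s)$ on $\R\times[0,+\infty)$.

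Finally I would let $s\to 0^+$ and $\d\to 0^+$ and pass to the upper envelope. By continuity of $u_\alpha$ away from $(0,0)$ and the explicit self-similar form \rf{eq::ss}, $u_\alpha(x\pm\d,t+s)\to u_\alpha(x,t)$ at every continuity point, so taking $\limsup$ over the relevant limits gives $v^*(x,t)\le (u_\alpha)^*(x,t)$ on $(\R\times[0,+\infty))\setminus\{(0,0)\}$, and the estimate at $(0,0)$ is vacuous since $(u_\alpha)^*(0,0)=1$ bounds any bounded $v$. I expect the \textbf{main obstacle} to be the careful bookkeeping at the single discontinuity point of $H$: one must choose the order of the limits in $s$ and $\d$ so that the shifted-and-delayed supersolution genuinely dominates the Heaviside datum \emph{everywhere} before invoking Theorem \ref{th::1}, and then recover sharpness in the limit without losing control of the upper semicontinuous envelope $(u_\alpha)^*$ near the origin.
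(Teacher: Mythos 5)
Your plan is correct in substance, but it implements the reduction to Theorem \ref{th::1} by a genuinely different device than the paper. The paper does not perturb $u_\alpha$ at all: for the supersolution case it translates $v$ in space, setting $v^a(x,t)=v(a+x,t)$, observes that $(u_\alpha)^*(x,0)\le H^*(x)\le H_*(x+a)\le v^a(x,0)$ because the Heaviside jump opens a gap between $H^*$ and $H_*(\cdot+a)$, slips an arbitrary continuous datum $u_a\in BUC(\R)$ into that gap, applies Theorem \ref{th::1}, and lets $a\to 0$. You instead translate \emph{and} time-delay the self-similar solution, comparing $v$ with $u_\alpha(x\pm\delta,t+s)$, whose initial trace $\Phi_\alpha\bigl((x\pm\delta)/s^{1/(\alpha+1)}\bigr)$ is itself the required $BUC$ datum. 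That works, and it is a legitimate alternative, but it leans on the specific structure of $\Phi_\alpha$ (that it attains its limit values $0$ and $1$ at the finite points $\mp y_\alpha$), whereas the paper's argument only uses the jump of $H$ and would apply verbatim to any discontinuous monotone datum. Two points in your write-up need tightening. First, the set where $u_\alpha(\cdot,s)$ fails to dominate $H^*$ is not ``the single point $x=0$'' but the whole interval $[0,y_\alpha s^{1/(\alpha+1)})$, on which $\Phi_\alpha(x/s^{1/(\alpha+1)})<1=H^*(x)$; correspondingly the issue is not the ``order of the limits in $s$ and $\delta$'' but a quantitative coupling: you must take $\delta\ge y_\alpha s^{1/(\alpha+1)}$ so that the left-shifted profile equals $1$ on all of $[0,+\infty)$, and then send $s\to 0^+$ with $\delta=\delta(s)\to 0$ in a single limit (the final inequality $v\le(u_\alpha)^*$ then follows from the definition of the usc envelope, as you say). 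Second, the ``lifted'' competitor does not help here: adding a constant $\epsilon<1/2$ to $u_\alpha(\cdot,t+s)$ still leaves the initial trace below $1$ near $x=0$, so only the spatial translate does the work; drop the lift.
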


\begin{proof}
Using Remark \ref{rem::1} and properties of $\Phi_\alpha$ 
gathered in  Theorem \ref{th:m:1}, 
it is straightforward to check that the self-similar solution
$u_\alpha(x,t)$ given in (\ref{eq::ss}) is a viscosity solution of equation
\rf{eq:eta}--\rf{ini:eta} with the initial condition (\ref{eq::id}).

Now, we show the inequality $(u_\alpha)_*\le  v_*$.
Let  $v$ be a viscosity supersolution of \rf{eq:eta}--\rf{ini:eta} 
with the Heaviside initial datum  (\ref{eq::id}).
Given $a>0$ and $v^a(x,t)=v(a+x,t)$, we have
$$(u_\alpha)^*(x,0)\le (u_0)^*(x)\le (u_0)_*(a+x)\le v^a(x,0).$$
Because of the translation invariance 
of the equation \rf{eq}, we see that $v^a$ is still a
supersolution. Moreover, for any $a>0$,  we can always find an initial condition $u_a\in
BUC(\R)$ such that
$$u_\alpha(x,0)\le u_a(x)\le v^a(x,0).$$
Therefore, applying the comparison principle (Theorem \ref{th::1}), we
deduce that
$$u_\alpha\le v^a.$$
Because this is true for any $a>0$, we can take the limit as $a\to 0$ and
get $(u_\alpha)_* \le v_*$.

For a subsolution $v$, we proceed similarly to obtain 
$v^*\le  (u_\alpha)^*$. This finishes the proof of  Lemma \ref{lem::104}.
\end{proof}
\bigskip

\begin{proof}[Proof of Theorem \ref{th:m:2}]
We consider a viscosity solution $v$ of equation \rf{eq} with the Heaviside initial datum  \rf{eq::id}.
Using the both inequalities of Lemma \ref{lem::104}, and the fact that
$(u_\alpha)_*=(u_\alpha)^*$ on $(\R\times [0,+\infty)) \backslash
\left\{(0,0)\right\}$, we deduce the equality
$v=u_\alpha$ 
on $(\R\times [0,+\infty)) \setminus
\left\{(0,0)\right\},$
which ends the proof of  Theorem \ref{th:m:2}. 
\end{proof}
\medskip

We will now prove the following weaker version of Theorem \ref{th:m:3}.

\begin{theorem}[Convergence for suitable initial data]\label{th::110}
The convergence \rf{conv:self} in 
Theorem~\ref{th:m:3} holds true under the following additional assumption 
\begin{equation}\label{eq::100}
\lim_{y\to -\infty} u_0(y)=0   \le  u_0(x)     \le  1 = \lim_{y\to +\infty} u_0(y).
\end{equation}   
\end{theorem}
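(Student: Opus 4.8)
The plan is to run the half-relaxed limits (relaxed semi-limits) method, adapted to the scaling parameter $\lambda\to+\infty$, and to close it using the comparison principle together with the explicit self-similar solution as a family of barriers. First I would note that, by the scaling invariance \rf{resc}, each rescaled function $u^\lambda$ is again the unique viscosity solution of \rf{eq} with initial datum $u_0^\lambda(x)=u_0(\lambda x)$. The confinement hypothesis \rf{eq::100} enters here in an essential way: since the constants $0$ and $1$ solve \rf{eq} and $0\le u_0\le 1$, the comparison principle (Theorem \ref{th::1}) gives $0\le u^\lambda\le 1$ uniformly in $\lambda$. Thus $\{u^\lambda\}$ is locally bounded uniformly in $\lambda$, and I may form the relaxed semi-limits $\overline{u}=\limsup^*_{\lambda\to+\infty}u^\lambda$ and $\underline{u}=\liminf_{*,\,\lambda\to+\infty}u^\lambda$. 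By the stability result (Theorem \ref{th:stab}), $\overline{u}$ is a subsolution and $\underline{u}$ a supersolution of \rf{eq} on $\R\times(0,+\infty)$, and $0\le\underline{u}\le\overline{u}\le 1$.

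The main obstacle is to identify the initial trace of these limits with the Heaviside datum, i.e.\ to prove $\overline{u}(\cdot,0)\le H^*$ and $\underline{u}(\cdot,0)\ge H_*$; as Remark \ref{initialconditions} warns, stability by itself does not transmit the initial condition. For the upper bound at a point $x_0<0$, I would fix $\eps>0$ and choose $R>0$ with $u_0\le\eps$ on $(-\infty,-R]$, so that $u_0^\lambda(x)\le \eps+H(x+R/\lambda)$. Since \rf{eq} is invariant under translations in $x$ and under addition of constants and $u_\alpha$ is a solution with Heaviside data, the function $\eps+u_\alpha(x+R/\lambda,t)$ (more precisely its lower semicontinuous envelope) is a supersolution lying above $u_0^\lambda$ at $t=0$; placing the $BUC$ datum $u_0^\lambda$ in between, Theorem \ref{th::1} yields
$$u^\lambda(x,t)\le \eps+u_\alpha(x+R/\lambda,t)\quad\text{on }\ \R\times[0,+\infty).$$
Passing to the $\limsup^*$ as $\lambda\to+\infty$ and using that $u_\alpha$ is continuous off the origin with $\Phi_\alpha\equiv 0$ on $(-\infty,-y_\alpha)$, the right-hand side tends to $\eps$ at $(x_0,0)$; letting $\eps\to0$ gives $\overline{u}(x_0,0)\le 0=H^*(x_0)$, while the bound $\overline{u}\le 1$ covers $x_0\ge0$. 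The symmetric construction with the lower barrier $-\eps+u_\alpha(x-R/\lambda,t)$, based on $u_0\ge 1-\eps$ near $+\infty$, gives $\underline{u}(\cdot,0)\ge H_*$. I expect the delicate points to be precisely here: the careful handling of the discontinuous Heaviside data inside the comparison principle, and the joint limit (in $\lambda$, $y\to x_0$, $s\to0$) in the barrier estimate.

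With the initial trace identified, $\overline{u}$ is a subsolution and $\underline{u}$ a supersolution of \rf{eq} with the Heaviside datum \rf{eq::id}, so Lemma \ref{lem::104} applies and gives $\overline{u}\le (u_\alpha)^*$ and $(u_\alpha)_*\le\underline{u}$ (using that $\overline{u}$ is usc and $\underline{u}$ is lsc). Combined with the trivial inequality $\underline{u}\le\overline{u}$ this sandwiches
$$(u_\alpha)_*\le \underline{u}\le\overline{u}\le (u_\alpha)^*.$$
On $(\R\times[0,+\infty))\setminus\{(0,0)\}$ the self-similar solution $u_\alpha$ is continuous, so $(u_\alpha)_*=(u_\alpha)^*=u_\alpha$ there, forcing $\underline{u}=\overline{u}=u_\alpha$. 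Finally, the equality of the two relaxed semi-limits with the continuous function $u_\alpha$ is exactly the standard criterion for local uniform convergence $u^\lambda\to u_\alpha$ on compact subsets of $(\R\times[0,+\infty))\setminus\{(0,0)\}$, which is the assertion \rf{conv:self}. Everything beyond the barrier step is a routine application of the machinery assembled in Sections \ref{sectvs}--\ref{s4}.
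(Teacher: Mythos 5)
Your proposal is correct and follows essentially the same route as the paper: half-relaxed limits in $\lambda$, stability to get a subsolution/supersolution pair, translated-and-shifted copies of the self-similar solution as barriers to identify the initial trace away from the origin, the confinement $0\le u^\lambda\le 1$ from \rf{eq::100} to handle the origin, and finally Lemma \ref{lem::104} plus the standard criterion equating the two semi-limits with the continuous function $u_\alpha$ off $(0,0)$. The only cosmetic difference is that the paper performs the barrier comparison on the unscaled solution (with profiles $\Phi_\alpha^\gamma$ for general $\gamma$, which it reuses later without \rf{eq::100}) and rescales afterwards, whereas you compare the rescaled solutions directly; the two are equivalent.
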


\begin{proof}
{\it Step 1: Limits after rescaling  of the solution.}
Consider a solution $u$ of \rf{eq}--\rf{ini}
 with an initial condition $u_0$
satisfying (\ref{eq::100}).
Recall that  for any $\lambda>0$,  the rescaled solution is 
given by $u^\lambda(x,t)=u(\lambda x,\lambda^{\alpha+1}t).$
 Let us define 
$$\overline{u}=\limsup_{\lambda\to +\infty }{}^* u^\lambda \quad \mbox{and}\quad
\underline{u}=\liminf_{\lambda\to +\infty }{}_* u^\lambda.$$
From the stability result (Theorem \ref{th:stab}), we know that $\overline{u}$
(resp. $\underline{u}$) is a subsolution (resp. supersolution) of
\rf{eq} on $\R\times (0,+\infty)$.
\medskip

 {\it Step 2: The initial condition.}
We now want to prove that 
\begin{equation}\label{eq::103}
\overline{u}(x,0)=\underline{u}(x,0)=H(x) \quad \mbox{for}\quad
x\in\R\backslash \left\{0\right\},   
\end{equation}
where $H$ is the Heaviside function.
To this end, we remark that $u_0$ satisfies for some $\gamma>0$ the inequality
$|u_0(x)|\le \gamma$ (note that $\gamma=1$ under assumption \rf{eq::100}), 
and for each $ \varepsilon >0$, there exists $M>0$ such that $|u_0(x)| < \varepsilon$ for $x\le -M$.

In particular, we get
$$u_0(x) < \varepsilon + \gamma H(x+M),$$
and then from the comparison principle, we deduce
\begin{equation}\label{eq::102}
u(x,t) \le \varepsilon + (u^\gamma_\alpha)_*\left(x+M,t\right)   
\end{equation}
with
\begin{equation}\label{eq::106}
u_\alpha^\gamma(x,t)=\Phi_\alpha^\gamma\left(\frac{x}{t^{1/(\alpha+1)}}\right)\quad
      \mbox{and}\quad \Phi_\alpha^\gamma(y)=\gamma
      \Phi_\alpha\left(\gamma^{-1/(\alpha+1)}y\right).   
\end{equation}
Here $\Phi_\alpha^\gamma$ is the self-similar profile solution of
      (\ref{eq::sse}) with the boundary conditions $0$ and $\gamma$ at
      infinity.
Moreover, because $u^\gamma_\alpha$ is continuous  off the origin, we
      can simply drop the star $*$\,, while we are interested in points
      different from the origin.
This implies
$$u^\lambda(x,t)\le \varepsilon +  \Phi^\gamma_\alpha\left(\frac{x+M
    \lambda^{-1}}{t^{1/(\alpha
      +1)}}\right),$$
and then
$$\overline{u}(x,t) \le \varepsilon +  \Phi^\gamma_\alpha\left(\frac{x}{t^{1/(\alpha
      +1)}}\right).$$
Therefore, for every $x<0$ we have
$$\overline{u}(x,0)\le \varepsilon + \Phi^\gamma_\alpha(-\infty)
=\varepsilon.$$
Because this is true for every $\varepsilon >0$, we get 
$\overline{u}(x,0)\le 0 \quad \mbox{for every}\quad x <0.$
We get the other inequalities similarly, and finally conclude that
(\ref{eq::103}) is valid.
\medskip 

 {\it 
Step 3: Initial condition at the origin, using assumption (\ref{eq::100}).}
We now make use of (\ref{eq::100}) to identify the initial values of the limits
$\overline{u}$ and $\underline{u}$. We deduce from the comparison principle 
that
$$0\le \underline{u}(x,0)\le \overline{u}(x,0) \le 1,$$
and then for every $x\in\R$ we have
$$\overline{u}(x,0) \le H^*(x) \quad \mbox{and}\quad \underline{u}(x,0) \ge
H_*(x).$$

 {\it Step 4: Identification of the limits after rescaling.}
From Lemma \ref{lem::104}, we obtain
$$\overline{u} \le (u_\alpha)^*=(u_\alpha)_* \le \underline{u} \quad \mbox{on}\quad (\R\times [0,+\infty)) \backslash
\left\{(0,0)\right\}.$$
We have by the construction $\underline{u}\le \overline{u}$,  hence we infer 
$$\overline{u} = \underline{u} = u_\alpha  \quad \mbox{on}\quad (\R\times [0,+\infty)) \backslash
\left\{(0,0)\right\}.$$

\medskip

 {\it Step 5: Conclusion for the convergence.}
Then for any compact $K\subset (\R\times [0,+\infty)) \backslash
\left\{(0,0)\right\}$, we can easily deduce that
$$\sup_{(x,t)\in K} |u^\lambda(x,t)-u_\alpha(x,t)| \to 0 \quad
\mbox{as}\quad \lambda \to +\infty,$$
which finishes the proof of  Theorem \ref{th::110}. 
\end{proof}

\section{Further decay properties and end of the proof of Theorem \ref{th:m:3}}\label{s5}

\begin{theorem}[Decay of a solution with  compact support]\label{pro::2}
Let $u$ be the solution to \rf{eq}--\rf{ini} with the initial datum $u_0\in
BUC(\R)$ satisfying for some $A>0$
\begin{equation}\label{eq::20}
u_0(x)\le 0 \quad \mbox{\rm for}\quad |x|\ge A.   
\end{equation}
Let also $\gamma>0$ be such that
$u_0(x) \le \gamma$   for all $ x\in \R.$
Then, there exist  $\beta, \beta' >0$ (depending on $\alpha$, but independent of $A,\gamma$) such that
$$u(x,t) \le C t^{-\beta},$$  
and
$$u(x,t) \le 0 \quad \mbox{\rm for}\quad |x|\ge C't^{\beta'}$$
with some constants $C=C(\alpha,A,\gamma)$ and $C'=C'(\alpha,A,\gamma)$.
\end{theorem}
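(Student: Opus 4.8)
The plan is to bound $u$ from above by comparison with translated, reflected, and rescaled copies of the self-similar solution, and then to iterate that bound. Since the hypotheses give $u_0\le \gamma$ on $[-A,A]$ and $u_0\le 0$ elsewhere, I would introduce the two self-similar barriers
\[
w_+(x,t)=\Phi_\alpha^\gamma\Big(\frac{x+A}{t^{1/(\alpha+1)}}\Big),\qquad
w_-(x,t)=\Phi_\alpha^\gamma\Big(\frac{A-x}{t^{1/(\alpha+1)}}\Big),
\]
with $\Phi_\alpha^\gamma(y)=\gamma\Phi_\alpha(\gamma^{-1/(\alpha+1)}y)$ as in \rf{eq::106}; using the translation and reflection invariance of \rf{eq}, these are viscosity solutions (a rising and a falling step) whose initial Heaviside data dominate $u_0$. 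Because $u_0$ is continuous and lies pointwise below these data, the comparison principle (Theorem \ref{th::1}, with the $BUC$ function taken to be $u_0$ itself) gives $u\le w_+$ and $u\le w_-$, hence $u\le \min\{w_+,w_-\}$. From this I read off two facts: since $\Phi_\alpha^\gamma$ vanishes for $y\le -y_\alpha\gamma^{1/(\alpha+1)}$ and equals $\gamma$ for $y\ge y_\alpha\gamma^{1/(\alpha+1)}$, the min is $0$ outside $|x|\le A+R(t)$, where $R(t):=y_\alpha\gamma^{1/(\alpha+1)}t^{1/(\alpha+1)}$ (finite propagation); and, by the symmetry $w_+(x,t)=w_-(-x,t)$, the maximum of the min is attained at $x=0$ and equals $\gamma\,\Phi_\alpha\big(A\gamma^{-1/(\alpha+1)}t^{-1/(\alpha+1)}\big)$.

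Next I would perform a one-step height reduction. Choosing $T$ so that $R(T)=2A$, i.e.\ $T=(2A)^{\alpha+1}\big(y_\alpha^{\alpha+1}\gamma\big)^{-1}$, the argument of $\Phi_\alpha$ above becomes $y_\alpha/2$, so the maximum of the barrier at time $T$ equals $\theta\gamma$ with $\theta:=\Phi_\alpha(y_\alpha/2)\in(\tfrac12,1)$, a number depending only on $\alpha$; simultaneously the finite-propagation bound gives $u(\cdot,T)\le 0$ for $|x|\ge 3A$. Thus at time $T$ the solution is again a bump, now of height at most $\theta\gamma$ and half-width at most $3A$.

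I would then iterate. By uniqueness of the viscosity solution (a consequence of Theorem \ref{th::1}) I may restart the evolution at $t=T$ from the continuous profile $u(\cdot,T)\in BUC(\R)$ and apply the one-step reduction again, and so on. After $n$ steps the solution, at time $T^{(n)}=\sum_{k=1}^nT_k$, is a bump of height at most $\theta^n\gamma$ and half-width at most $3^nA$, where $T_k=T_1\,\rho^{\,k-1}$ with $T_1=T$ and ratio $\rho:=3^{\alpha+1}/\theta>1$ (each step multiplies the height by $\theta$, the width by $3$, and, through the formula for $T$, the elapsed time by $\rho$). Summing the geometric series gives $T^{(n)}\sim \rho^{\,n}$; eliminating $n$ then yields $\theta^{\,n}\sim (T^{(n)})^{-\beta}$ and $3^{\,n}\sim (T^{(n)})^{\beta'}$ with
\[
\beta=\frac{\ln(1/\theta)}{\ln\rho}>0,\qquad \beta'=\frac{\ln 3}{\ln\rho}\in\Big(0,\tfrac1{\alpha+1}\Big),
\]
both depending only on $\alpha$. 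Interpolating over each interval $t\in[T^{(n)},T^{(n+1)}]$ (heights are nonincreasing and half-widths nondecreasing along the scheme) produces $u(x,t)\le C t^{-\beta}$ and $u(x,t)\le 0$ for $|x|\ge C't^{\beta'}$, with $C,C'$ depending on $\alpha,A,\gamma$.

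The main obstacle is structural rather than computational: the degeneracy $|u_x|$ heuristically freezes the equation at a smooth interior maximum (there $u_t=-|u_x|\,\Da u=0$), so no single step-barrier can decay — indeed $\min\{w_+,w_-\}$ only drops to $\theta\gamma$, and never below $\gamma/2$, in finite time. The power decay must therefore be manufactured by iteration, and the real work is the bookkeeping of the three competing geometric rates (height $\theta$, width $3$, time $\rho$) together with the verification that restarting is legitimate. Two minor points remain. The comparison against the discontinuous Heaviside barriers is licit precisely because $u_0$ is continuous and bounded above by the barrier data, the jump being harmless since $u_0\le 0=(w_\pm)_*$ at $x=\mp A$, exactly in the spirit of the proof of Lemma \ref{lem::104}. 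Finally, the amplitude bound extends to all $t>0$ after enlarging $C$ (using $u\le\gamma$ for $t\le T$), whereas the support estimate is meaningful only for $t$ bounded below, the sharper bound $|x|\ge A+R(t)$ covering the short-time range; this suffices for the application to Theorem \ref{th:m:3}, where the relevant regime is $\lambda^{\alpha+1}t\to\infty$ and $\beta'<1/(\alpha+1)$ forces the rescaled support to shrink.
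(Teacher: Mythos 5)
Your proposal is correct and follows essentially the same route as the paper: comparison of $u$ with the rising and falling self-similar step barriers (regularized to handle the Heaviside jump), a one-step height/width reduction, and a geometric iteration whose three rates are eliminated to give the power laws. The only difference is cosmetic — you fix the cut at $\xi=y_\alpha/2$ and let $\theta=\Phi_\alpha(y_\alpha/2)$ be the resulting level, whereas the paper fixes a level $\nu\in(1/2,1)$ and lets $\xi_\nu$ be the corresponding point, which yields the same scheme with width factor $1+y_\alpha/\xi_\nu$ in place of your $3$.
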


First, we need the following 
\begin{lemma}[Decay after the first interaction]\label{lem:3}
Consider $\Phi_\alpha$ and $y_\alpha$  defined in Theorem \ref{th:m:1}.
Let $\nu \in (1/2,1)$ and $\xi_\nu \in (0,y_\alpha)$ be such that
$\Phi_\alpha(\xi_\nu)=\nu$. Let $T>0$ be defined by
\begin{equation}\label{eq::21}
\frac{A}{\gamma^{1/(\alpha+1)} T^{1/(\alpha
      +1)}} = \xi_\nu.   
\end{equation}
Then, under the assumptions of Theorem  \ref{pro::2}, we have
\begin{equation}\label{eq::22}
u(x,t) \le \nu \gamma \quad \mbox{\rm for all}\quad t\ge T, \quad x\in\R,   
\end{equation}
and
\begin{equation}\label{eq::23}
u(x,t) \le 0 \quad \mbox{\rm for all}\quad 0\le t\le T\quad \mbox{\rm and}\quad |x|\ge
A\left(1+\frac{y_\alpha}{\xi_\nu}\right). 
\end{equation}
\end{lemma}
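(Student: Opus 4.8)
The plan is to trap the solution $u$ from above between two translated self-similar fronts and then to read off both inequalities \rf{eq::22} and \rf{eq::23} directly from the explicit shape of the profile described in Theorem \ref{th:m:1}. Recall from \rf{eq::106} the self-similar solution $u_\alpha^\gamma(x,t)=\Phi_\alpha^\gamma\left(x/t^{1/(\alpha+1)}\right)$ with boundary values $0$ and $\gamma$ at infinity, which is a viscosity solution of \rf{eq} (as already observed in the proof of Lemma \ref{lem::104}). Since equation \rf{eq} is invariant under the translations $x\mapsto x+c$ and, because the L\'evy kernel in \rf{rep} is even, under the reflection $x\mapsto -x$, the two functions
$$w^+(x,t)=u_\alpha^\gamma(x+A,t)\qquad\text{and}\qquad w^-(x,t)=u_\alpha^\gamma(A-x,t)$$
are again viscosity solutions of \rf{eq}; they are the self-similar fronts centered at $x=-A$ (increasing) and at $x=A$ (decreasing).

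The first step is the comparison $u\le\min(w^+,w^-)$. In contrast with Lemma \ref{lem::104}, here the subsolution $u$ has a \emph{continuous} initial datum, so no auxiliary shift is needed and $u_0\in BUC(\R)$ itself serves as the intermediate function in the comparison principle (Theorem \ref{th::1}). Indeed the lower semicontinuous initial traces are $(w^+)_*(x,0)=\gamma$ for $x>-A$ and $0$ for $x\le-A$, while $(w^-)_*(x,0)=\gamma$ for $x<A$ and $0$ for $x\ge A$. Using the hypotheses $u_0(x)\le0$ for $|x|\ge A$ and $u_0\le\gamma$ everywhere, one checks immediately that $u_0\le(w^+)_*(\cdot,0)$ and $u_0\le(w^-)_*(\cdot,0)$. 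Theorem \ref{th::1} then yields $u\le w^+$ and $u\le w^-$, hence $u\le W:=\min(w^+,w^-)$ on $\R\times(0,+\infty)$.

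It remains to extract the two conclusions from the explicit form of $W$. Since $\Phi_\alpha$ is nondecreasing, analytic and nonconstant on $(-y_\alpha,y_\alpha)$, it is strictly increasing there, so from $\Phi_\alpha(\xi_\nu)=\nu$ one gets that $w^+(x,t)\le\nu\gamma$ holds exactly for $x\le \xi_\nu\gamma^{1/(\alpha+1)}t^{1/(\alpha+1)}-A$ and, symmetrically, $w^-(x,t)\le\nu\gamma$ holds exactly for $x\ge A-\xi_\nu\gamma^{1/(\alpha+1)}t^{1/(\alpha+1)}$. These two half-lines cover all of $\R$ precisely when $\xi_\nu\gamma^{1/(\alpha+1)}t^{1/(\alpha+1)}\ge A$, that is, by the definition \rf{eq::21} of $T$, exactly when $t\ge T$; since $u\le W$, this gives \rf{eq::22}. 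For \rf{eq::23} I use instead that $\Phi_\alpha^\gamma$ vanishes on $(-\infty,-\gamma^{1/(\alpha+1)}y_\alpha]$, so that $w^-(x,t)=0$ for $x\ge A+\gamma^{1/(\alpha+1)}y_\alpha t^{1/(\alpha+1)}$ and $w^+(x,t)=0$ for $x\le -A-\gamma^{1/(\alpha+1)}y_\alpha t^{1/(\alpha+1)}$. For $0\le t\le T$ the bound $\gamma^{1/(\alpha+1)}t^{1/(\alpha+1)}\le A/\xi_\nu$ shows that these two zero sets already cover $\{|x|\ge A(1+y_\alpha/\xi_\nu)\}$, where therefore $u\le W=0$, which is exactly \rf{eq::23}.

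The only genuinely delicate point is the comparison step: one must make sure the barriers $w^\pm$, discontinuous at $t=0$, are admissible supersolutions and that the initial inequalities hold in the lower semicontinuous sense. This is handled just as in Lemma \ref{lem::104} and Theorem \ref{th::1}, and is in fact \emph{easier} here, because the continuity of $u_0$ removes the need for the limiting translation $a\to0^+$ used in Lemma \ref{lem::104}. The remaining manipulations are elementary consequences of the monotonicity of $\Phi_\alpha$ and of the compact support of $\Phi_\alpha'$ recorded in Theorem \ref{th:m:1}.
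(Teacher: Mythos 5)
Your proof is correct and follows essentially the same route as the paper: comparison of $u$ from above with the translated (and, by the evenness of the L\'evy kernel, reflected) self-similar front $\Phi_\alpha^\gamma\bigl((x+A)/t^{1/(\alpha+1)}\bigr)$, followed by reading off \rf{eq::22} from the monotonicity of $\Phi_\alpha$ and \rf{eq::23} from the compact support of $\Phi_\alpha'$, using \rf{eq::21} to convert the thresholds. The only (cosmetic) differences are that the paper treats one front and invokes symmetry rather than writing $w^-$ explicitly, and that it justifies the comparison with the discontinuous barrier by approximating $\gamma H(x+A)$ with $\Phi_\alpha^\gamma\bigl((x+A+\delta)/t_\varepsilon^{1/(\alpha+1)}\bigr)$ and letting $t_\varepsilon,\delta\to 0^+$, whereas you apply Theorem \ref{th::1} directly with $u_0$ as the sandwiched $BUC$ function --- which is admissible as that theorem is stated.
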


\begin{proof}
Let us denote 
$\Phi_\alpha^\gamma(y)=\gamma\Phi_\alpha\left(\gamma^{-1/(\alpha+1)}y\right)$.
Then we have
$$\gamma H(x+A)  \ge u_0(x) \quad \mbox{for}\quad x\in \R,$$
where 
$$\gamma H(x+A) = \lim_{t\to 0^+}
\Phi_\alpha^\gamma\left(\frac{x+A}{t^{1/(\alpha +1)}}\right) \quad
\mbox{for}\quad x+A\not=0.$$

Now, we  apply the comparison principle to deduce that
$$\Phi_\alpha^\gamma\left(\frac{x+A}{t^{1/(\alpha +1)}}\right) \ge u(x,t)
\quad \mbox{for}\quad (x,t)\in \R\times (0,+\infty).$$
This argument can be made rigorous, simply, by replacing the function 
$\gamma H(x+A)$ by
$\Phi_\alpha^\gamma\left((x+A+\delta)/(t_\varepsilon^{1/(\alpha
      +1)})\right)$
for $\delta >0$ and some sequence $t_\varepsilon \to 0^+$, and then taking
the limit $\delta \to 0^+$. 

Therefore we have 
$$\gamma \Phi_\alpha\left(\frac{x+A}{\gamma^{1/(\alpha+1)} t^{1/(\alpha +1)}}\right) \ge u(x,t)
\quad \mbox{for}\quad (x,t)\in \R\times (0,+\infty).$$
From the properties of the support of $\Phi_\alpha$, we also deduce that
$$u(x,t) \le 0 \quad \mbox{for}\quad x\le -\left(A +
  y_\alpha (\gamma t)^{1/(\alpha+1)} \right),$$
and then, by symmetry,
$$u(x,t) \le 0 \quad \mbox{for}\quad |x|\ge A +
  y_\alpha (\gamma t)^{1/(\alpha+1)}. $$
Moreover, it follows from the monotonicity of $\Phi_\alpha$  that 
$$\gamma \Phi_\alpha\left(\frac{A}{(\gamma t)^{1/(\alpha+1)}}\right) \ge u(x,t)$$
for $x \le 0$, and by symmetry we can prove the same property for $x\ge 0$.
Then for $T>0$ defined in (\ref{eq::21}) we easily deduce (\ref{eq::22}) and (\ref{eq::23}).
This ends the proof of  Lemma \rf{lem:3}.
\end{proof}

\bigskip

\begin{proof}[Proof of Theorem \ref{pro::2}]
We apply recurrently Lemma \ref{lem:3}.
Define $A_0=A$, $\gamma_0=\gamma$,
and
$$A_{n+1}=A_n\left(1+\frac{y_\alpha}{\xi_\nu}\right),\quad \gamma_{n+1}=\nu
\gamma_n, \quad \mbox{and} \quad \frac{A_n}{(\gamma_n T_n)^{1/(\alpha+1)}} = \xi_\nu.$$
This gives
$$A_n=A_0\left(1+\frac{y_\alpha}{\xi_\nu}\right)^n,\quad \gamma_n
=\nu^n\gamma_0,\quad T_n=K\mu^n,$$
with
$$K=\frac{1}{\gamma_0}\left(\frac{A_0}{\xi_\nu}\right)^{\alpha+1},\quad
    1<\mu=\frac{1}{\nu}\left(1+\frac{y_\alpha}{\xi_\nu}\right)^{\alpha+1},$$
and therefore
$$u(x,t) \le \gamma_n \quad \mbox{for}\quad t\ge T_0 + ...+T_{n-1} =
K\frac{\mu^n-1}{\mu-1}.$$
In particular, we get for any $n\in\N$
$$u(x,t) \le \gamma_0 \nu^n \quad \mbox{for}\quad t\ge K_0 \mu^n$$
with $K_0=K/(\mu-1)$.
This implies
$$u(x,t)\le \gamma_0K_0^\beta t^{-\beta} \quad \mbox{for any} \quad t>0,
\quad x\in\R,$$
with 
$$\beta=-\frac{\ln \nu}{\ln \mu}>0.$$
Similarly, we have
$$u(x,t) \le 0 \quad \mbox{for}\quad |x|\ge A_{n} \quad \mbox{if}\quad t
\le T_0 + ...+T_{n-1}=K\frac{\mu^n-1}{\mu-1}.$$
In particular, we get for any $n\in\N\backslash \left\{0\right\}$
$$u(x,t) \le 0 \quad \mbox{for}\quad |x|\ge A_0
\left(1+\frac{y_\alpha}{\xi_\nu}\right)^{n}, \quad \mbox{if}\quad t\le
K_0'\mu^{n}$$
with $K_0'=K/\mu$. This implies
$$u(x,t) \le 0 \quad \mbox{for}\quad |x|\ge A_0 (K_0')^{-\beta'} t^{\beta'}
\quad \mbox{for}\quad t\ge 0,$$
with
$$\beta'=\frac{\ln \left(1+\frac{y_\alpha}{\xi_\nu}\right)}{\ln \mu}>0.$$
This ends the proof of Theorem \ref{pro::2}. 
\end{proof}

\bigskip

As a corollary, we can now remove assumption (\ref{eq::100}) in Theorem \ref{th::110} and
complete the proof of Theorem \ref{th:m:3}.

\begin{proof}[Proof of Theorem \ref{th:m:3}]
We simply repeat Step 3 of the proof of Theorem \ref{th::110}, but here
without assuming (\ref{eq::100}). Then, for any $\varepsilon >0$ there exists
$A>0$ such that
$$u_0(x)\le 1+\varepsilon \quad \mbox{for}\quad |x|\ge A.$$
By Theorem \ref{pro::2} applied to the solution
$u(x,t)-1-\varepsilon$, 
this implies that there exists a constant $C>0$ (depending on
$\varepsilon$) such that
$$u(x,t) \le 1+\varepsilon + Ct^{-\beta}.$$
Therefore, for any for $\lambda>0$ the following inequality
$$u^\lambda(x,t) \le 1+\varepsilon + Ct^{-\beta}\lambda^{-\beta}$$
holds true, which implies that 
$\displaystyle{\overline{u} = \limsup_{\lambda\to +\infty}{}^* u^\lambda}$ satisfies
$$\overline{u}(x,t) \le 1+\varepsilon \quad \mbox{for}\quad (x,t)\in
\R\times (0,+\infty).$$
Since this is true for any $\varepsilon>0$, we deduce that
$$\overline{u}(x,t) \le 1 \quad \mbox{for}\quad (x,t)\in
\R\times (0,+\infty).$$
Let us now define
$\tilde{\overline{u}} = \min\left(1,\overline{u}\right).$
By the construction,
$$\tilde{\overline{u}}(x,t)= \overline{u}(x,t) \quad \mbox{for}\quad
(x,t)\in \R\times [0,+\infty) \backslash  \left\{(0,0)\right\},$$
and, by (\ref{eq::103}), we have
$\tilde{\overline{u}}(x,0) \le H^*(x) \; \mbox{for all}\; x\in\R.$
Therefore, $\tilde{\overline{u}}$ is a subsolution of \rf{eq}--\rf{ini} on
$\R\times [0,+\infty)$ with the initial datum being the Heaviside function.

Similarly, we can show that $\displaystyle{\underline{u} = \limsup_{\lambda\to +\infty}{}_* u^\lambda}$ satisfies
$$\underline{u} \ge 0 \quad \mbox{for}\quad (x,t)\in \R\times (0,+\infty).$$
Hence, the function
$\tilde{\underline{u}} = \max\left(0,\underline{u}\right),$
which is a supersolution of \rf{eq}--\rf{ini}  on
$\R\times [0,+\infty)$ with the Heaviside initial datum. 

Finally, the conclusion of the proof is the same as in the proof of Theorem
\ref{th::110} where $\overline{u}$ (resp. $\underline{u}$) is replaced by
$\tilde{\overline{u}}$ (resp. $\tilde{\underline{u}}$). This ends the proof
of Theorem~\ref{th:m:3}. 
\end{proof}


\section{Approximate equation and gradient estimates}\label{s6}

In this section, in order to prove our gradient estimates 
of viscosity solutions stated in Theorem \ref{th:m:4},
we  replace equation (\ref{eq})  
by an approximate equation  
for which smooth solutions do exist. Indeed, with $\varepsilon>0$, 
we consider the following 
initial value problem
\begin{eqnarray}\label{eq:e}
&&u_t=\varepsilon u_{xx}  -|u_x|  \Da u \quad \mbox{on}\quad \R\times (0,+\infty),\\
\label{ini:e}
&&u(x,0)=u_0(x) \quad \mbox{for}\quad x\in \R.
\end{eqnarray}
 We have added to this equation an auxiliary viscosity term which is
stronger than $\Da u$ and $u_x$. In the case $\alpha\in (0,1]$, we will see
later (in Section \ref{s7}) 
that it is possible to pass to the limit $\varepsilon\to 0^+$ 
 in
$L^\infty(\R)$, 
which is the required convergence for the framework of viscosity
solutions. The difficulty in the case $\alpha\in (1,2)$ comes from the fact
that,  for the limit equation with $\varepsilon=0$, we are not able to
give a meaning to the product $|u_x| \ (\Da u) $ in the sense of
distributions, while it is possible when $\alpha\in (0,1]$.

Our results on qualitative properties of 
solutions to the regularized problem \rf{eq:e}--\rf{ini:e} 
are stated in the following two theorems.

\begin{theorem}[Approximate equation -- existence of  solutions]\label{th::10a}
Let $\alpha\in (0,1]$ and $\varepsilon>0$.
Given any initial datum $u_0\in C^2(\R)$ such that 
$u_{0,x}\in L^1(\R)\cap L^\infty(\R)$, there
exists a unique solution 
$u\in C(\R\times [0,+\infty))\cap C^{2,1}(\R\times (0,+\infty))$ of 
\rf{eq:e}--\rf{ini:e}.
This solution satisfies
\begin{equation}\label{ux:reg}
u_x\in C([0,T], L^{p}(\R))\cap 
C((0,T];\; W^{1,p}(\R))\cap C^1((0,T], L^{p}(\R))
\end{equation}
for every $p\in(1,\infty)$ and each $T>0$.
\end{theorem}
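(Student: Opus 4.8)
The plan is to solve \rf{eq:e}--\rf{ini:e} by Duhamel's formula and a contraction argument, exploiting that when $\alpha\le 1$ the viscous term $\varepsilon u_{xx}$ is genuinely of higher order than the nonlocal term $\Da u$, then to pass from local to global existence via maximum-principle a priori bounds, and finally to read off the regularity of $u_x$ by parabolic bootstrap. Let $S_\varepsilon(t)=e^{\varepsilon t\partial_x^2}$ denote the heat semigroup (the Fourier multiplier with symbol $e^{-\varepsilon t\xi^2}$), which commutes with $\partial_x$ and with $\Da$. A classical solution is a fixed point of
$$
u(t)=S_\varepsilon(t)u_0-\int_0^t S_\varepsilon(t-s)\big(|u_x(s)|\,\Da u(s)\big)\,ds .
$$
Since the conclusion concerns $u_x$, it is convenient to work with $v=u_x$, which solves $v_t=\varepsilon v_{xx}-\partial_x\big(|v|\,\Lambda^{\alpha-1}\mathcal H v\big)$ with $v(0)=u_{0,x}$, and whose mild form is
$$
v(t)=S_\varepsilon(t)u_{0,x}-\int_0^t \partial_x S_\varepsilon(t-s)\big(|v(s)|\,\Lambda^{\alpha-1}\mathcal H v(s)\big)\,ds .
$$

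For local existence I would run the Banach fixed point theorem in, say, $C([0,T_*];L^p(\R)\cap L^\infty(\R))$. The two ingredients are the smoothing bound $\|\partial_x S_\varepsilon(\tau)f\|_p\le C(\varepsilon\tau)^{-1/2}\|f\|_p$, whose singular factor $(\varepsilon(t-s))^{-1/2}$ is integrable at $s=t$, and the observation that the nonlocal factor $\Da u=\Lambda^{\alpha-1}\mathcal H v$ is, for $\alpha\le1$, no more singular than $v$ itself (it has order $\alpha-1\le0$ relative to $v$). Together these make the quadratic map $v\mapsto |v|\,\Lambda^{\alpha-1}\mathcal H v$ subcritical with respect to the viscous smoothing, so that the Duhamel integral is a contraction on a short interval $[0,T_*]$ with $T_*$ depending only on $\|u_{0,x}\|_{L^p\cap L^\infty}$ and $\varepsilon$. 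The same estimate yields uniqueness on the common interval of existence.

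Next I would produce the global-in-time a priori bounds that prevent finite-time blow-up. Since constants solve the equation and $u_0\le\|u_0\|_\infty$, the comparison principle (Theorem \ref{th::1}, applied with $\eta=\varepsilon$) gives $\|u(\cdot,t)\|_\infty\le\|u_0\|_\infty$. For the gradient, differentiating the equation shows that $w=u_x$ satisfies
$$
w_t=\varepsilon w_{xx}-\mathrm{sgn}(w)\,w_x\,\Da u-|w|\,\Da w .
$$
At a point where $w$ attains its spatial maximum one has $w_x=0$ and $w_{xx}\le0$, while the L\'evy--Khintchine representation \rf{rep} forces $\Da w\ge0$ there, so $-|w|\,\Da w\le0$; hence $\max_x w(\cdot,t)$ is nonincreasing, and the symmetric argument for $\min_x w$ gives $\|u_x(\cdot,t)\|_\infty\le\|u_{0,x}\|_\infty$ for all $t>0$. (Decay of $v$ at infinity, inherited from $u_{0,x}\in L^1$, legitimizes the use of interior extrema; it can also be obtained through barriers.) These bounds are uniform in $t$, so the local solution extends to a global one on $\R\times[0,+\infty)$, and uniqueness in the class $C(\R\times[0,+\infty))\cap C^{2,1}$ follows by applying Theorem \ref{th::1} in both directions to two classical solutions.

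Finally I would bootstrap regularity. Because $u_{0,x}\in L^1\cap L^\infty\subset L^p$ for every $p\in(1,\infty)$, and because the forcing $|v|\,\Lambda^{\alpha-1}\mathcal H v$ is bounded in $L^p$ on each $[0,T]$ once $\|v\|_\infty$ is controlled, the $L^p$ smoothing of $S_\varepsilon$ applied to the Duhamel formula gives $v=u_x\in C([0,T];L^p(\R))$ (using strong continuity of $S_\varepsilon$ up to $t=0$) and, for $t>0$, one additional half-derivative, hence $v\in C((0,T];W^{1,p}(\R))$; reading $v_t$ off the equation then yields $v\in C^1((0,T];L^p(\R))$. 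Standard parabolic $L^p$/Schauder estimates for $\partial_t-\varepsilon\partial_x^2$ with the now locally bounded (indeed H\"older) right-hand side upgrade $u$ to $C^{2,1}(\R\times(0,+\infty))$. I expect the main obstacle to be the nonlocal quadratic nonlinearity: one must choose the functional setting so that the product $|v|\,\Lambda^{\alpha-1}\mathcal H v$ is genuinely controlled and the kernel $(\varepsilon(t-s))^{-1/2}$ stays integrable — and this is exactly the place where $\alpha\le1$ is essential, the borderline case $\alpha=1$ (where $\Lambda^{\alpha-1}\mathcal H=\mathcal H$ has order $0$) being the tightest and requiring the most care.
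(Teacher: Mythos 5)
Your proposal follows essentially the same route as the paper: rewriting the nonlocal term as $\Da u=-\Lambda^{\alpha-1}\mathcal{H}u_x$, passing to the equation for $v=u_x$ in Duhamel form with the heat kernel $G(\varepsilon t)$, running a Banach fixed point argument in a space of type $C([0,T];L^1(\R)\cap L^\infty(\R))$ using the boundedness of $\mathcal{H}$ and the Riesz potential estimate \rf{R:est} (which is precisely where $\alpha\le 1$ enters), and then extending globally via the a priori bounds $\|v(t)\|_p\le\|v_0\|_p$ together with standard parabolic regularity. Your maximum-principle derivation of $\|u_x(\cdot,t)\|_\infty\le\|u_{0,x}\|_\infty$ is a minor variant of what the paper invokes for \rf{est:first}, so there is nothing substantively different to flag.
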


\begin{theorem}[Approximate equation -- decay estimates]\label{th::10b}
Under the assumptions of Theorem \ref{th::10a}, the solution $u=u(x,t)$
of \rf{eq:e}--\rf{ini:e}
satisfies 
\begin{equation}
\|u(\cdot,t)\|_\infty \le \|u_0\|_\infty, \qquad
\|u_x(\cdot,t)\|_\infty \le \|u_{0,x}\|_\infty, \label{est:first}
\end{equation}
and
\begin{equation}\label{Lp:estbis}
\|u_x(\cdot, t)\|_p\leq C_{p,\alpha} \|u_{0,x}\|_1^{\frac{p\alpha+1}{p(\alpha+1)}}
t^{-\frac{1}{\alpha+1}\left(1-\frac{1}{p}\right)},
\end{equation}
for every $p\in [1,\infty)$, all $t>0$, and  constants
$C_{p,\alpha}>0$ (see, \rf{Cpa} below), independent of $\varepsilon>0$, $t>0$ and $u_0$.
\end{theorem}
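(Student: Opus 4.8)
The plan is to derive everything from the equation satisfied by $v=u_x$. Since $\Da$ is a Fourier multiplier it commutes with $\partial_x$, so differentiating \rf{eq:e} in $x$ (and using $\partial_x\Da u=\Da v$) gives
$$v_t=\varepsilon v_{xx}-\partial_x\!\left(|v|\,\Da u\right)=\varepsilon v_{xx}-\mathrm{sgn}(v)\,v_x\,\Da u-|v|\,\Da v,$$
which is the viscous nonlocal conservation law alluded to in Remark \ref{rem:porous}. The regularity \rf{ux:reg}, upgraded by the parabolic smoothing of the term $\varepsilon u_{xx}$, makes $v$ smooth enough for $t>0$ to run the pointwise arguments below; and since the viscous term will only contribute a favourable (nonpositive) amount to every energy identity, all resulting bounds are uniform in $\varepsilon>0$.

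The two sup bounds come from the maximum principle. At an interior spatial maximum of $u$ one has $u_x=0$, so $|u_x|\,\Da u$ vanishes there while $\varepsilon u_{xx}\le 0$; hence $t\mapsto\max_x u(\cdot,t)$ does not increase, and symmetrically for the minimum, giving $\|u(\cdot,t)\|_\infty\le\|u_0\|_\infty$. For $v$, at a positive spatial maximum we have $v_x=0$, $v_{xx}\le 0$, and, reading the L\'evy--Khintchine representation \rf{rep} at a maximum, $\Da v\ge 0$; the displayed equation then gives $v_t\le 0$ there, and symmetrically at a negative minimum, so $\|v(\cdot,t)\|_\infty\le\|u_{0,x}\|_\infty$. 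The bound $\|v(\cdot,t)\|_1\le\|u_{0,x}\|_1$ follows by testing the divergence-form $v$–equation against a regularisation of $\mathrm{sgn}(v)$, the sign-change contributions dropping out because $|v|=0$ there (a Kato-type inequality handles the nonlocal term).

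The core is the $L^\infty$ decay $\|v(\cdot,t)\|_\infty\le C\,\|u_{0,x}\|_1^{\alpha/(\alpha+1)}\,t^{-1/(\alpha+1)}$, for which I would run a Moser/Alikakos iteration. Testing the $v$–equation with $|v|^{p-2}v$ and integrating by parts (the cross term collapses via $\partial_x\Da u=\Da v$) gives the energy identity
$$\frac{d}{dt}\int_\R|v|^p\,dx=-\varepsilon p(p-1)\int_\R|v|^{p-2}v_x^2\,dx-(p-1)\int_\R|v|^{p-1}v\,\Da v\,dx.$$
Dropping the viscous term and invoking the Stroock--Varopoulos inequality,
$$\int_\R|v|^{p-1}v\,\Da v\,dx\ge\frac{4p}{(p+1)^2}\bigl\|\Lambda^{\alpha/2}\bigl(|v|^{(p-1)/2}v\bigr)\bigr\|_2^2,$$
turns the dissipation into a fractional Sobolev seminorm of $V:=|v|^{(p-1)/2}v$, for which $\|V\|_2^2=\|v\|_{p+1}^{p+1}$ and $\|V\|_1=\|v\|_{(p+1)/2}^{(p+1)/2}$. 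The one–dimensional fractional Nash inequality $\|\Lambda^{\alpha/2}V\|_2^2\ge c\,\|V\|_2^{2(\alpha+1)}\|V\|_1^{-2\alpha}$ (valid for every $\alpha\in(0,2)$ by Fourier splitting) then closes a differential inequality relating $\|v\|_{p+1}$ to the lower exponent $\|v\|_{(p+1)/2}$. Iterating over the doubling sequence $q_n=2^n$, with the $L^1$ bound as base case and with the constants tracked so their product stays finite as $p\to\infty$, yields the claimed $L^\infty$ decay with constant independent of $\varepsilon$.

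Finally, \rf{Lp:estbis} follows by interpolation,
$$\|v(\cdot,t)\|_p\le\|v(\cdot,t)\|_1^{1/p}\,\|v(\cdot,t)\|_\infty^{1-1/p},$$
inserting the $L^1$ bound and the $L^\infty$ decay: the power of $\|u_{0,x}\|_1$ assembles to $\tfrac1p+\tfrac{\alpha}{\alpha+1}\cdot\tfrac{p-1}{p}=\tfrac{p\alpha+1}{p(\alpha+1)}$ and the power of $t$ to $-\tfrac{1}{\alpha+1}\bigl(1-\tfrac1p\bigr)$, exactly as stated; the self-similar solution of Remark \ref{rem:porous} shows the rate is sharp. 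The main obstacle is the $L^\infty$ bound: making the Stroock--Varopoulos and Nash steps rigorous for sign-changing $v$ and, above all, carrying out the Moser iteration so that the accumulated constants converge and remain uniform in $\varepsilon$.
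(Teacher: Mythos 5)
Your argument tracks the paper's proof up to and including the $L^p$ energy identity and the Stroock--Varopoulos step \rf{vSV}, and your treatment of \rf{est:first} and of the $L^1$ bound \rf{L1:est} is essentially the paper's. The genuine problem is what you do next. You route the whole of \rf{Lp:estbis} through an $L^\infty$ decay estimate $\|v(\cdot,t)\|_\infty\le C\|u_{0,x}\|_1^{\alpha/(\alpha+1)}t^{-1/(\alpha+1)}$, to be obtained by a Moser/Alikakos iteration, and then interpolate between $L^1$ and $L^\infty$. But that $L^\infty$ estimate is nowhere proved: you only sketch the iteration and you yourself call it ``the main obstacle''. It is also strictly stronger than anything the paper establishes --- the authors point out in Remark \ref{rem:6.8} that the constants $C_{p,\alpha}$ in \rf{Cpa} blow up as $p\to\infty$, so the $p=\infty$ endpoint is precisely what they cannot reach, and the theorem deliberately stops at $p<\infty$. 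Carrying out an Alikakos iteration here would require tracking the constants from \rf{hyper} and \rf{Nash} along the doubling sequence uniformly in $\varepsilon$, and coping with the fact that your differential inequality couples three different norms ($\|v\|_p$ on the left, $\|v\|_{p+1}$ and $\|v\|_{(p+1)/2}$ on the right after Nash) rather than two; none of this is supplied, so as written the conclusion \rf{Lp:estbis} rests on an unproven lemma.

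The detour is also unnecessary: with exactly the ingredients you already have, the estimate closes at each fixed $p$. Substitute $w=|v|^{(p+1)/2}$ into the Nash inequality \rf{Nash} and use the two H\"older interpolations $\|v\|_p^{p^2/(p^2-1)}\le\|v\|_{p+1}\,\|v\|_1^{1/(p^2-1)}$ and $\|v\|_{(p+1)/2}\le\|v\|_p^{p/(p+1)}\|v\|_1^{1/(p+1)}$ to bound the dissipation term of \rf{vSV} from below by a power of $\|v(t)\|_p^p$ times a negative power of $\|v\|_1$; this is the paper's inequality \rf{GN}. Combined with \rf{L1:est} it yields the autonomous differential inequality \rf{diff:in}, namely $\frac{d}{dt}\|v(t)\|_p^p\le -K\left(\|v(t)\|_p^p\right)^{(p+\alpha)/(p-1)}$ with $K$ proportional to $\|u_{0,x}\|_1^{-(p\alpha+1)/(p-1)}$, and the elementary ODE lemma $f'\le -Kf^\beta\Rightarrow f(t)\le (K(\beta-1)t)^{-1/(\beta-1)}$ gives \rf{Lp:estbis} with the constant \rf{Cpa} directly, uniformly in $\varepsilon$. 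I recommend replacing the iteration-plus-interpolation scheme by this one-step closure; your Stroock--Varopoulos and Nash steps are already the right ones, and the rest of your write-up (the sup bounds, the $L^1$ contraction, the sharpness via the self-similar profile) stands.
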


\subsection{Existence theory}

\begin{proof}[Proof of Theorem \ref{th::10a}]
Note first that 
\begin{equation} \label{DH}
\Da u=-\Lambda^{\alpha-1} \HH u_x,
\end{equation}
 where $\HH$ 
denotes the Hilbert transform defined in the Fourier variables by
$\widehat {({\HH}v)}(\xi)= i\ \mbox{sgn}(\xi)\  \widehat v(\xi).$
We recall that the Hilbert transform is bounded on the $L^{p}$-space  for
any $p\in (1,+\infty)$ (see 
\cite[Ch.~2, Th.~1]{St}), 
{i.e.}  it satisfies for any function $v\in L^{p}(\R)$ the following inequality 
\begin{equation}\label{H:est}
\|{\HH} v\|_{p} \le C_{p}  \|v\|_{p}
\end{equation}
with a constant $C_p$ independent of $v$.

For $\alpha\in (0,1)$, the operator $\Lambda^{\alpha-1}$ defined 
analogously as in \rf{Da} corresponds to the convolution with the Riesz potential 
$\Lambda^{\alpha-1}v=C_\alpha |\cdot|^{-\alpha}*v$.
Hence, by \cite[Ch.~5, Th.~1]{St}, 
 for any $p> 1/\alpha$ with $\alpha\in (0,1]$ and any function $v\in L^q(\R)$, we have 
\begin{equation}\label{R:est}
\|\Lambda^{\alpha-1}v\|_{p} \le C_{p,\alpha} \|v\|_{q} 
\quad \mbox{with}\quad
\frac{1}{q}=\frac{1}{p} +1-\alpha.
\end{equation}

Now, if $u=u(x,t)$ is a solution to \rf{eq:e}--\rf{ini:e}, 
using identity \rf{DH}, we write the initial value problem for $v=u_x$
 \begin{eqnarray}\label{eq:v}
&&v_t=\varepsilon v_{xx}  +(|v| \Lambda^{\alpha-1}\HH v)_x \quad \mbox{on}\quad \R\times (0,+\infty),\\
\label{ini:v}
&&v(\cdot,0)=v_0=u_{0,x} \in L^1(\R)\cap L^\infty(\R)
\end{eqnarray}
as well as its equivalent integral formulation
\begin{equation}\label{duh:v}
v(t)=G(\varepsilon t)*v_0+\int_0^t \partial_x G(\varepsilon(t-\tau))
*(|v| \Lambda^{\alpha-1}\HH v)\,{\rm d}\tau,
\end{equation}
with the Gauss-Weierstrass kernel $G(x,t)=(4\pi t)^{-1/2}\exp(-x^2/(4t))$.

The next step is completely standard and consists in applying the Banach 
contraction principle to equation \rf{duh:v} in a ball 
in the Banach space
$$
\mathcal{X}_T=C([0,T];\, L^1(\R)\cap L^\infty(\R)) 
$$
endowed with the usual norm $\|v\|_T= \sup_{t\in [0,T]}(\|v(t)\|_1+ \|v(t)\|_\infty)$.
Using well known estimates of the heat semigroup and inequalities 
\rf{H:est}--\rf{R:est} combined with the imbedding 
$ L^1(\R)\cap L^\infty (\R)\subset L^p(\R)$ for each $p\in [1,\infty]$, we
obtain a solution $v=v(x,t)$ 
 to equation \rf{duh:v} in the space $\mathcal{X}_T$ provided
$T>0$ is sufficiently small.  We refer the reader to, {e.g.}, \cite{AB98,BSW02} for  examples of such a reasoning. 

This solution satisfies \rf{ux:reg}
for every $p\in(1,\infty)$ and each $T>0$, by standard regularity estimates 
of solutions to parabolic equations. Moreover, following the reasoning from
\cite{AB98}, one can show that the solution is regular.

Finally, this local-in-time solution can be extended to global-in-time 
({i.e.} for all $T>0$) because of the estimates 
$\|v(t)\|_p\leq \|v_0\|_p$ for every $p\in [1,\infty]$ 
being the immediate consequence of inequalities
\rf{L1:est}, \rf{vSV}, and \rf{eq:vinfty} below.
\end{proof}
\bigskip

\subsection{Gradient estimates}
In the proof of the decay estimates of $u_x$,
we shall  require several properties of the
operator $\Da$.
First, we recall the Nash inequality for the operator $\Da$.

\begin{lemma}[Nash inequality]
Let $0<\alpha$.
There exists a constant $C_N>0$ such that
\begin{equation}\label{Nash}
\|w\|_2^{2(1+\alpha)} \leq C_N
\|\Lambda^{\alpha/2}w\|_2^2\|w\|_1^{2\alpha}
\end{equation}
for all functions $w$ satisfying  $w \in L^1(\R)$ and
$\Lambda^{\alpha/2}w\in L^2(\R)$.
\end{lemma}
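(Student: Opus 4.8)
The plan is to prove \rf{Nash} by a frequency-splitting argument on the Fourier side, exactly as in the classical Nash inequality, exploiting that $\Lambda^{\alpha/2}$ acts as multiplication by $|\xi|^{\alpha/2}$. By Plancherel's theorem (up to the normalizing constant in the definition of the Fourier transform, which I absorb into the final constant) one has
$$\|w\|_2^2 = \int_\R |\widehat w(\xi)|^2 \,{\rm d}\xi \qquad \text{and} \qquad \|\Lambda^{\alpha/2} w\|_2^2 = \int_\R |\xi|^\alpha |\widehat w(\xi)|^2 \,{\rm d}\xi.$$
I may assume $w\in L^1(\R)\cap L^2(\R)$ with $\Lambda^{\alpha/2}w\in L^2(\R)$, since otherwise the right-hand side of \rf{Nash} is infinite and there is nothing to prove; in fact the estimate below shows $\|w\|_2<\infty$ whenever the right-hand side is finite.

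First I would record the elementary bound $\|\widehat w\|_\infty\le\|w\|_1$. Then, for an arbitrary cut-off radius $R>0$, I split the Plancherel identity for $\|w\|_2^2$ into low and high frequencies. On $\{|\xi|\le R\}$ I bound $|\widehat w(\xi)|^2\le\|w\|_1^2$ and integrate to get a contribution $\le 2R\,\|w\|_1^2$. On $\{|\xi|>R\}$ I insert the factor $|\xi|^\alpha/|\xi|^\alpha$ and use $|\xi|^{-\alpha}\le R^{-\alpha}$ to bound the contribution by $R^{-\alpha}\|\Lambda^{\alpha/2}w\|_2^2$. This yields, for every $R>0$,
$$\|w\|_2^2 \le 2R\,\|w\|_1^2 + R^{-\alpha}\,\|\Lambda^{\alpha/2}w\|_2^2.$$

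Finally I optimize the right-hand side over $R>0$. Writing $a=2\|w\|_1^2$ and $b=\|\Lambda^{\alpha/2}w\|_2^2$, the function $R\mapsto aR+bR^{-\alpha}$ is minimized at $R_*=(\alpha b/a)^{1/(\alpha+1)}$, with minimal value $c_\alpha\,a^{\alpha/(\alpha+1)}b^{1/(\alpha+1)}$, where $c_\alpha=\alpha^{1/(\alpha+1)}+\alpha^{-\alpha/(\alpha+1)}$. Substituting this choice of $R$ and raising the resulting inequality to the power $\alpha+1$ produces
$$\|w\|_2^{2(\alpha+1)} \le C_N\,\|w\|_1^{2\alpha}\,\|\Lambda^{\alpha/2}w\|_2^2, \qquad C_N = 2^\alpha c_\alpha^{\alpha+1},$$
which is \rf{Nash}.

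There is no serious obstacle here: the argument is a one-variable calculus optimization combined with Plancherel. The only points requiring a little care are the bookkeeping of the Fourier normalization constant (which affects only the value of $C_N$, not the form of the inequality) and the preliminary reduction to the case in which both factors on the right are finite. As a consistency check, one verifies that both sides of \rf{Nash} scale as $\lambda^{-(\alpha+1)}$ under the dilation $w\mapsto w(\lambda\,\cdot)$, so the exponents are forced by homogeneity.
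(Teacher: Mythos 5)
Your proof is correct: the frequency splitting $\|w\|_2^2\le 2R\|w\|_1^2+R^{-\alpha}\|\Lambda^{\alpha/2}w\|_2^2$ followed by optimization in $R$ is exactly the standard argument for the fractional Nash inequality, and your handling of the reduction to $w\in L^2$ and the scaling check are both sound. The paper does not prove the lemma itself but cites \cite[Lemma 2.2]{KMX}, where essentially this same Plancherel-based argument is carried out, so your approach coincides with the one the authors rely on.
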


The proof of inequality \rf{Nash} is given, e.g., in \cite[Lemma 2.2]{KMX}.
\medskip

Our next tool is the, so called, Stroock--Varopoulos inequality.

\begin{lemma}[Stroock--Varopoulos inequality]\label{le3.1}
Let $0\leq \alpha\leq 2$.
For every
$p>1$,  we have
\begin{equation}\label{hyper}
\int_\R (\Da w)|w|^{p-2} w \dx\geq
\frac {4(p-1)}{p^2}\int_\R\left(\Lambda^{\frac\alpha
2}|w|^{\frac p2}\right)^2\dx
\end{equation}
for all $w\in L^p(\R)$ such that $\Da w\in L^p(\R)$. If $\Da w\in
L^1(\R)$, we obtain
\begin{equation}\label{sgn est}
\int_\R (\Da w)\;{\rm sgn}\, w\,\dx\geq 0.
\end{equation}
Moreover, if $w,\, \Da w\in L^2(\R)$, it follows that 
\begin{equation}\label{+ est}
\int_\R (\Da w) w^+\,\dx\geq 0 \quad\mbox{\rm and}\quad \int_\R (\Da w)
w^-\,\dx\geq 0,
\end{equation}
where $w^+=\max\{0,w\}$  and $w^-=\max\{0,-w\}$.
\end{lemma}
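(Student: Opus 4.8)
\textbf{Plan for the proof of Lemma \ref{le3.1} (Stroock--Varopoulos inequalities).}

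The plan is to derive all four inequalities from a single source: the L\'evy--Khintchine representation \rf{rep} of $-\Da$ as a nonnegative jump operator, which makes $\Da$ a Dirichlet-form generator so that the associated bilinear form interacts with convex functions in a sign-definite way. The cleanest route is to first establish a pointwise inequality inside the kernel integral and then integrate in $x$. Concretely, writing $-\Da w(x)=C(\alpha)\int_\R \{w(x+z)-w(x)-zw'(x)\uni_{\{|z|\le 1\}}\}|z|^{-1-\alpha}\dz$, I multiply by the test-nonlinearity $g(w(x))$, integrate $\dx$, and symmetrize in $z\mapsto -z$. The linear correction term $zw'(x)\uni_{\{|z|\le 1\}}$ is odd in $z$ and cancels after symmetrization, so for every inequality the heart of the matter is to control the symmetrized double integral
\begin{equation*}
-\int_\R (\Da w)\,g(w)\dx=\frac{C(\alpha)}{2}\int_\R\int_\R\big(w(x+z)-w(x)\big)\big(g(w(x+z))-g(w(x))\big)\frac{\dz\,\dx}{|z|^{1+\alpha}}.
\end{equation*}
With the sign conventions of the lemma this means I want to show the above double integral is \emph{nonpositive} in each case after choosing $g$ appropriately, equivalently that $(a-b)(g(a)-g(b))$ has the right sign for the relevant $g$.

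For \rf{sgn est} the choice is $g(w)=\mathrm{sgn}\,w$, which is nondecreasing, so $(a-b)(\mathrm{sgn}\,a-\mathrm{sgn}\,b)\ge 0$ pointwise and the symmetrized integral is nonnegative, giving \rf{sgn est} directly. For \rf{+ est} I take $g(w)=w^+$ and $g(w)=-w^-$, both nondecreasing, and use that $(a-b)(a^+-b^+)\ge 0$ and $(a-b)(a^--b^-)\le 0$; the signs work out so that each of $\int(\Da w)w^+$ and $\int(\Da w)w^-$ is nonnegative. The inequality \rf{hyper} is the genuinely quantitative one: here $g(w)=|w|^{p-2}w$, and I reduce to the elementary pointwise estimate
\begin{equation*}
\big(a-b\big)\big(|a|^{p-2}a-|b|^{p-2}b\big)\ \ge\ \frac{4(p-1)}{p^2}\Big(|a|^{p/2}\,\mathrm{sgn}\,a-|b|^{p/2}\,\mathrm{sgn}\,b\Big)^2,
\end{equation*}
valid for all $a,b\in\R$ and $p>1$. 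Granting this, I set $a=w(x+z)$, $b=w(x)$, insert into the symmetrized double integral, and recognize the right-hand side as exactly the Dirichlet form of $|w|^{p/2}\mathrm{sgn}\,w$, i.e. $\tfrac{4(p-1)}{p^2}\cdot C(\alpha)\,\tfrac12\iint(\cdots)^2|z|^{-1-\alpha}=\tfrac{4(p-1)}{p^2}\int_\R(\Lambda^{\alpha/2}(|w|^{p/2}\mathrm{sgn}\,w))^2\dx$, and note $(|w|^{p/2}\mathrm{sgn}\,w)^2=|w|^p$ so the stated form $\int(\Lambda^{\alpha/2}|w|^{p/2})^2$ follows once $w$ has constant sign, with the general signed case absorbed into the same representation.

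The main obstacle is making the formal symmetrization rigorous under the stated integrability hypotheses rather than assuming Schwartz-class $w$: the kernel $|z|^{-1-\alpha}$ is nonintegrable at the origin, so the inner integral only converges because of the $w''$-type cancellation, and interchanging the order of integration and splitting off the odd correction term is not automatic. I would handle this by first proving the identities for smooth rapidly decreasing $w$ (where Fubini and \rf{rep} apply without difficulty), and then passing to the limit by a density/approximation argument using the hypotheses $w,\Da w\in L^p$ (resp. $L^1$, $L^2$), together with the self-adjointness $\int(\Da w)g\dx=\int w\,(\Da g)\dx$ to move the operator onto a smoother factor when needed. The secondary technical point is verifying the elementary pointwise inequality for $|a|^{p-2}a$; this is standard and reduces, after dividing by $(|a|^{p/2}-|b|^{p/2}\,\mathrm{sgn})^2$ and setting $s=|b/a|$, to a one-variable convexity estimate, so I would state it as an elementary lemma and not belabor the calculus.
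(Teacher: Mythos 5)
Your overall route---symmetrizing the L\'evy--Khintchine representation \rf{rep} to obtain
$\int_\R (\Da w)\,g(w)\dx=\tfrac{C(\alpha)}{2}\iint\big(w(x+z)-w(x)\big)\big(g(w(x+z))-g(w(x))\big)|z|^{-1-\alpha}\dz\dx$
and then inserting monotone or power-type nonlinearities $g$---is sound, and it is essentially the Dirichlet-form/Beurling--Deny argument that the paper does not reproduce: the text only cites \cite[Th.~2.1 and condition (1.7)]{LS} for \rf{hyper}, \cite[Lemma~1]{DI} plus an approximation argument for \rf{sgn est}, and \cite[Prop.~1.6]{LS} for \rf{+ est}, remarking further that \rf{sgn est} is the $p\to 1$ limit of \rf{hyper}. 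So you are supplying the proof the references contain, which is a legitimate, more self-contained alternative. Two points of care: your displayed identity carries the wrong sign (the left-hand side should be $+\int_\R(\Da w)g(w)\dx$, not its negative; test it with $\alpha=2$ and $g=\mathrm{id}$), which is why your text first asks for the double integral to be ``nonpositive'' and two lines later uses that it is ``nonnegative''; and the passage from $\|\Lambda^{\alpha/2}(|w|^{p/2}\mathrm{sgn}\,w)\|_2^2$ to the stated $\|\Lambda^{\alpha/2}|w|^{p/2}\|_2^2$ is not only the constant-sign case---you need the one-line observation that $\big||s|-|t|\big|\le |s-t|$ applied inside the Gagliardo-seminorm representation of $\|\Lambda^{\alpha/2}\cdot\|_2^2$, which shows that the version with $|w|^{p/2}$ is the weaker (hence implied) one.

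The genuine gap is in the $w^-$ part of \rf{+ est}. You correctly record that $(a-b)(a^--b^-)\le 0$, but then assert that ``the signs work out'' to give $\int_\R(\Da w)w^-\dx\ge 0$; with the corrected identity, that pointwise inequality yields exactly the opposite, $\int_\R(\Da w)w^-\dx\le 0$. And the printed inequality cannot be rescued: for $w\le 0$ one has $w^-=-w$ and $\int_\R(\Da w)w^-\dx=-\int_\R|\xi|^\alpha|\widehat w(\xi)|^2\,{\rm d}\xi\le 0$, strictly unless $w\equiv 0$. So with the convention $w^-=\max\{0,-w\}$ the second inequality of \rf{+ est} as stated in the lemma has the wrong direction (the standard fact is $\int_\R(\Da w)w^-\dx\le 0$, equivalently $\ge 0$ for the convention $w^-=\min\{0,w\}$), and it is the $\le 0$ version that the paper actually needs for the lower barrier in the proof of the $L^\infty$ bound. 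Your argument, once the signs are straightened out, proves the correct version; you should say so explicitly rather than claim the printed one. Everything else---the cancellation of the odd compensator after symmetrization, the pointwise inequality $(a-b)(|a|^{p-2}a-|b|^{p-2}b)\ge\tfrac{4(p-1)}{p^2}(|a|^{p/2}\mathrm{sgn}\,a-|b|^{p/2}\mathrm{sgn}\,b)^2$, and the density argument needed to pass from Schwartz functions to the stated integrability classes---is standard and appropriately flagged.
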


Inequality \rf{hyper} is well known in the theory of sub-Markovian
operators and its statement  and the proof is given, e.g., in \cite[Th. 2.1 combined with the Beurling--Deny condition
(1.7)]{LS}.
Inequality \rf{sgn est}, called the (generalized) Kato inequality, 
is used, e.g., in \cite{DI} to construct entropy solutions of conservation laws
with a L\'evy diffusion. It can be easily deduced from \cite[Lemma 1]{DI} by an
approximation argument. 
The proof of \rf{+ est} can be found, for example, in
\cite[Prop.~1.6]{LS}.

\begin{remark}\label{Stroock-V}
Remark that inequality
(\ref{sgn est}) appears to be a limit case of (\ref{hyper}) for
$p=1$. Inequality (\ref{+ est}) for $w^+$ follows easily from (\ref{sgn
  est}), by a comparison argument, if
for instance $w\in C^\infty_c(\R)$. Finally, remark that the constant
appearing in (\ref{hyper}) is the same as for the Laplace operator 
${\partial^2}/{\partial x^2}=-\Lambda^2$.
\end{remark}

Our proof of the decay of $v(t)=u_x(t)$ is based on the following
Gagliardo--Nirenberg type inequality 

\begin{lemma}[Gagliardo--Nirenberg type inequality]
Assume that $p\in (1,\infty)$ and $\alpha>0$ are fixed and  arbitrary. For all 
$v\in L^1(\R)$ such that 
$\Lambda^{\alpha/2}|v|^{(p+1)/2}\in L^2(\R)$, the following inequality is valid
\begin{equation}\label{GN}
\|v\|_p^a\leq C_N \left\|\Lambda^{\alpha/2}|v|^{(p+1)/2} \right\|_2^2 \|v\|_1^b,
\end{equation}
where
$$
a=\frac{p(p+\alpha)}{p-1}, \quad b=\frac{p\alpha+1}{p-1},
$$
and $C_N$ is the constant from the Nash inequality \rf{Nash}.
\end{lemma}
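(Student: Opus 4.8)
The plan is to reduce the Gagliardo–Nirenberg inequality \eqref{GN} to the Nash inequality \eqref{Nash} by the substitution $w=|v|^{(p+1)/2}$. With this choice, the quantity $\|\Lambda^{\alpha/2}|v|^{(p+1)/2}\|_2^2$ appearing on the right-hand side of \eqref{GN} is exactly $\|\Lambda^{\alpha/2}w\|_2^2$, so the left factor of the Nash inequality is already in the desired form. The remaining task is purely bookkeeping: rewrite $\|w\|_2$ and $\|w\|_1$ in terms of norms of $v$ and verify that the exponents $a$ and $b$ come out as stated.

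Concretely, I would first record the two elementary identities
\begin{equation*}
\|w\|_2^2=\int_\R |v|^{p+1}\dx=\|v\|_{p+1}^{p+1},\qquad
\|w\|_1=\int_\R |v|^{(p+1)/2}\dx=\|v\|_{(p+1)/2}^{(p+1)/2}.
\end{equation*}
Substituting these into \eqref{Nash} gives
\begin{equation*}
\|v\|_{p+1}^{(p+1)(1+\alpha)}\leq C_N\,\big\|\Lambda^{\alpha/2}|v|^{(p+1)/2}\big\|_2^2\,\|v\|_{(p+1)/2}^{(p+1)\alpha}.
\end{equation*}
At this stage the inequality is phrased with the norms $\|v\|_{p+1}$ and $\|v\|_{(p+1)/2}$, whereas \eqref{GN} is stated with $\|v\|_p$ and $\|v\|_1$. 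The point that needs care, and which I expect to be the only genuine subtlety, is the interpolation bridging these indices: one must control $\|v\|_{p+1}$ from below (or $\|v\|_p$ from above) and $\|v\|_{(p+1)/2}$ in terms of $\|v\|_1$ and $\|v\|_{p+1}$, using the standard $L^r$-interpolation $\|v\|_r\le \|v\|_1^{1-\theta}\|v\|_s^{\theta}$ with the appropriate $\theta$ determined by $1/r=(1-\theta)+\theta/s$.

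I would therefore apply interpolation to express both $\|v\|_p$ and $\|v\|_{(p+1)/2}$ through $\|v\|_1$ and $\|v\|_{p+1}$, insert these into the displayed consequence of \eqref{Nash}, and then collect powers of $\|v\|_{p+1}$ on one side. After cancelling a common power of $\|v\|_{p+1}$, the exponent of $\|v\|_p$ on the left should simplify to $a=p(p+\alpha)/(p-1)$ and the exponent of $\|v\|_1$ on the right to $b=(p\alpha+1)/(p-1)$; the constant remains $C_N$ up to the harmless interpolation factors, which are absorbed. The main obstacle is purely algebraic, namely verifying that the several interpolation exponents combine to yield precisely $a$ and $b$ rather than merely inequalities of the same shape; I expect this to be a short but delicate computation, and I would double-check it by testing the scaling $v\mapsto v(\kappa\,\cdot)$, under which each side of \eqref{GN} must transform homogeneously with the same power of $\kappa$, thereby confirming the exponents are correct.
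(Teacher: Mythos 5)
Your proposal is correct and follows essentially the same route as the paper: substitute $w=|v|^{(p+1)/2}$ into the Nash inequality and then interpolate the resulting $L^{p+1}$- and $L^{(p+1)/2}$-norms between $L^1$ and $L^p$ (the paper uses precisely the two H\"older inequalities $\|v\|_p\le\|v\|_1^{1/p^2}\|v\|_{p+1}^{(p^2-1)/p^2}$ and $\|v\|_{(p+1)/2}\le\|v\|_p^{p/(p+1)}\|v\|_1^{1/(p+1)}$), after which the exponents collect to $a$ and $b$ exactly as you anticipate. The algebra you defer does close, with no loss in the constant.
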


\begin{proof}
Without loss of generality, we can assume that $\|v\|_1\neq 0$.
Substituting $w=|v|^{(p+1)/2}$ in the Nash inequality \rf{Nash} we obtain
$$
\|v\|_{p+1}^{(p+1)(1+\alpha)}\leq C_N \left\|\Lambda^{\alpha/2}|v|^{(p+1)/2} 
\right\|_2^2 
\|v\|_{(p+1)/2}^{\alpha(p+1)}.
$$
Next, it suffices to apply two particular cases of the H\"older inequality
$$
\left(
\frac{\|v\|_p}{\|v\|_1^{1/p^2}}
\right)^{p^2/(p^2-1)} \leq \|v\|_{p+1}
\quad \mbox{as well as}\quad
\|v\|_{(p+1)/2}\leq \|v\|^{p/(p+1)}_p\|v\|_1^{1/(p+1)},
$$ 
and  compute carefully all the exponents which appear on the both sides of the 
resulting inequality. 
\end{proof}


\begin{proof}[Proof of Theorem \ref{th::10b}]
The first inequality in \rf{est:first} is the immediate consequence of the comparison principle
from Theorem \ref{th::1}, because classical solutions are viscosity solutions, 
as well.
Maximum principle and an argument based on inequalities \rf{+ est}
({cf.}~\cite{KMX} for more detail) lead to the second inequality in 
\rf{est:first}. We  also discuss this inequality in Remark \ref{rem:6.8} below.

For the proof of the $L^1$-estimate
\begin{equation}\label{L1:est}
 \|u_x(t)\|_1\leq \|u_{0,x}\|_1
\end{equation}
({i.e.} \rf{Lp:estbis} with $p=1$ and $C_{p,\alpha}=1$), 
we multiply equation  
\rf{eq:v} by
${\rm sgn}\,v={\rm sgn}\,u_x$ and we integrate with respect to $x$ to obtain
$$
\frac{\rm d}{\dt}\int_\R|v|\dx = \varepsilon \int_\R v_{xx}{\rm sgn}\,v\dx+
\int_\R \left((\Lambda^{\alpha-1}{\HH}v) |v| 
\right)_x {\rm sgn}\, v\dx.
$$
The first term on the right hand side is nonpositive by the Kato inequality 
({i.e.} \rf{sgn est} with $\alpha=2$) hence we skip it.
Remark that (formally)
\begin{equation*}
\begin{split}
\int_{\R}  \left((\Lambda^{\alpha-1}{\HH}v) |v|  \right)_x {\rm
  sgn}\, v \dx
&= {\int_{\R}  (\Lambda^{\alpha-1}{\HH}v) v_x ({\rm  sgn\,} v)^2 +
(\Lambda^{\alpha-1}{\HH}v_x) v} \dx\\
& =  \int_{\R}   \left((\Lambda^{\alpha-1}{\HH}v) v\right)_x \dx = 0.
\end{split}
\end{equation*}
Now, approximating the sign function in a standard way by  ${\rm 
sgn_\delta}(z)=z/\sqrt{z^2+\delta}$, integrating by parts,
and passing to the limit $\delta \to 0^+$, one can show rigorously
that the second term on right hand 
side of the above inequality is nonpositive.  
This completes the proof of \rf{L1:est} with $p=1$. 
\medskip

Next, we multiply equation in \rf{eq:v} by $|v|^{p-2}v$ with $p>1$ to get 
$$
\frac{1}{p}\frac{\rm d}{\dt}\int_\R|v|^p\dx 
=\varepsilon \int_\R v_{xx}|v|^{p-2}v\dx+
\int_\R \left((\Lambda^{\alpha-1}{\HH}v) |v| \right)_x |v|^{p-2} v\dx.
$$
We drop the first term on the right hand side, because it is 
nonpositive by \rf{hyper} with $\alpha=2$.
Integrating by parts and using the elementary identity
$$
|v|\left(|v|^{p-2}v\right)_x=\frac{p-1}{p}\left(|v|^{p-1}v\right)_x,
$$
we transform the second quantity on the right hand side as follows
$$
\int_\R \left((\Lambda^{\alpha-1}{\HH}v) |v| \right)_x |v|^{p-2} v\dx
=-\frac{p-1}{p}\int_\R (\Da v)|v|^{p-1}v\dx.
$$
Consequently, by the Stroock--Varopoulos inequality \rf{hyper} (with the exponent  $p$ replaced by $p+1$),  we obtain 
\begin{equation}\label{vSV}
\frac{\rm d}{\dt} \|v(t)\|_p^p\leq -\frac{4p(p-
1)}{(p+1)^2}\left\|\Lambda^{\alpha/2}|v|^{(p+1)/2}\right\|_2^2.
\end{equation}
Hence, the interpolation inequality \rf{GN} combined with \rf{L1:est} lead to 
the following inequality for $\|v(t)\|_p^p$ 
\begin{equation}\label{diff:in}
\frac{\rm d}{\dt} \|v(t)\|_p^p\leq -\frac{4p(p-1)}{(p+1)^2} 
\left(C_N\|v_0\|_1^{(p\alpha+1)/(p-1)}  \right)^{-1}
\left(\|v(t)\|_p^p\right)^{(p+\alpha)/(p-1)}.
\end{equation}

Recall now that if a nonnegative (sufficiently smooth function) $f=f(t)$ 
satisfies, for all $t>0$,
the inequality 
$f'(t)\leq -Kf(t)^\beta$ with constants $K>0$ and $\beta>1$, then $f(t)\leq 
(K(\beta-1)t)^{-1/(\beta-1)}$.
Applying this simple result to the differential inequality \rf{diff:in}, we 
complete the proof of the $L^p$-decay
estimate \rf{Lp:estbis} with the constant 
\begin{equation}\label{Cpa}
C_{p,\alpha}=\left(C_N^{-1}\frac{4p(\alpha+1)}{(p+1)^2} \right)^{-
\frac{1}{\alpha+1}
\left(1-\frac{1}{p}\right)},
\end{equation}
where $C_N$ is the constant from the Nash inequality \rf{Nash}.
\end{proof}

\begin{remark}\label{rem:6.8}
Note that, for every fixed $\alpha$, we have 
$\lim_{p\to\infty}C_{p,\alpha}=+\infty$.
By this reason, we are not allowed to pass directly to the limit $p\to+\infty$ in 
inequalities \rf{Lp:estbis}
(as  was done in, e.g.,  \cite[Th. 2.3]{KMX}) in order to obtain a decay estimate of 
$v(t)$ in the $L^\infty$-norm. Nevertheless, 
using  \rf{diff:in} we immediately deduce the inequality 
$\|v(\cdot,t)\|_p \leq \|v_0(\cdot)\|_p$ valid for every $p\in (1, \infty)$.
Hence,  passing to the limit   $p\to +\infty$ we
get
\begin{equation}\label{eq:vinfty}
\|v(\cdot,t)\|_{\infty} \leq \|v_0(\cdot)\|_\infty.   
\end{equation}

In general, we cannot hope to get 
a decay estimate of $\|v(\cdot,t)\|_{\infty}$  better  than that in 
(\ref{eq:vinfty}), because each constant  is a solution of  equation \rf{eq}. 
Moreover, one can show  that if $v$ is constant on
an interval at the initial time, then it will stay equal to the same constant on an interval 
depending on $t$, because of the finite propagation phenomenon that can be seen for the self-similar profile.
\end{remark}

\section{Passage to the limit  and proof of Theorem \ref{th:m:4}}\label{s7}

Now, we are in a position to complete the proof of the gradient estimates
\rf{eq::decay}. First, we show that form the sequence 
$\{u^\varepsilon\}_{\varepsilon>0}$ of solutions to the approximate problem
\rf{eq:e}--\rf{ini:e}  one can extract, {\it via} 
the Ascoli--Arzel\`a theorem, a subsequence converging uniformly.
Theorem \ref{th:stab} on the stability and Remark \ref{remark:plus}
imply that the limit function is a viscosity
solution to \rf{eq}--\rf{ini}. Passing to the limit $\varepsilon\to 0^+$ in
inequalities \rf{est:first} and \rf{Lp:estbis} we complete our reasoning.

\begin{proof}[Proof of Theorem \ref{th:m:4}]
First, let us suppose  that $u_0\in C^\infty(\R)\cap W^{2,\infty}(\R)$ 
with $u_{0,x}\in L^1(\R)\cap L^\infty(\R)$. Denote by 
$u^\varepsilon=u^\varepsilon(x,t)$ the corresponding solution to the 
approximate problem with $\varepsilon>0$. 
\medskip

 {\it Step 1: Modulus of continuity in space.}
Under this additional assumption, we have 
\begin{equation}\label{eq::200b}
\|u^\varepsilon_x(\cdot, t)\|_p\leq C_{p} t^{-\gamma_p}   
\end{equation}
with $C_{p}= C_{p,\alpha}
\|u_{0,x}\|_1^{\frac{p\alpha+1}{p(\alpha+1)} }$ and $\gamma_p=\frac{1}{\alpha+1}\left(1-\frac{1}{p}\right)$.
The Sobolev imbedding theorem implies that there exist 
some $\beta\in (0,1)$ and $C_0>0$ such that 
\begin{equation}\label{eq::holder}
|u^\varepsilon(x+h,t)-u^\varepsilon(x,t)|\le |h|^\beta C_0C_{p} t^{-\gamma_p}.   \end{equation}
\medskip

 {\it Step 2: Modulus of continuity in time.}
Let us consider a nonnegative function $\varphi\in C^\infty(\R)$ 
with $\mbox{supp}\,\varphi\subset [-1,1]$ such that  
$\int_{\R}\varphi(x)\dx=1$, and for  any $\delta>0$ set
$\varphi_\delta(x)=\delta^{-1}\varphi(\delta^{-1}x)$. Then, multiplying
(\ref{eq:e}) by $\varphi_\delta$ and integrating in space, we get
\begin{equation*}
\begin{split}
\frac{\rm d}{\dt}\left(\int_{\R} u^\varepsilon(x,t)\varphi_\delta(x)\dx\right)=&\ \varepsilon
\la u^\varepsilon(\cdot,t),(\varphi_\delta)_{xx}\ra \\
&- \int_{\R} \varphi_\delta(x)\ |u^\varepsilon_x(x,t)| 
({\mathcal H}\ \Lambda^{\alpha-1}\ u^\varepsilon_x(x,t))\dx,
\end{split}
\end{equation*}
and then with ${1}/{p}+{1}/{p'}=1$
\begin{equation}
\begin{split}
\left|\frac{\rm d}{\dt}\left(\int_{\R}
    u^\varepsilon(x,t)\varphi_\delta(x)\dx\right)\right|
&\le \varepsilon \|u^\varepsilon(\cdot,t)\|_{\infty}\|(\varphi_\delta)_{xx}\|_{1} \\
&+
    \|\varphi_\delta\|_\infty 
\|u^\varepsilon_x(\cdot,t)\|_p\| \|{\mathcal H}\ \Lambda^{\alpha-1}\
    u^\varepsilon_x(\cdot,t)\|_{p'}. 
\end{split}
\end{equation}
Here, we have used relation \rf{DH}. 
Combining inequalities \rf{H:est} and \rf{R:est}  with estimate (\ref{eq::200b}), we get for $p'> 1/\alpha$
$$\|{\mathcal H}\ \Lambda^{\alpha-1}\
    u^\varepsilon_x(\cdot,t)\|_{p'} \le C_{p'}C_{p',\alpha}C_{q} t^{-\gamma_q}.$$
Then for any bounded time interval $I\subset (0,+\infty)$ there exists a
    constant $C_{I,\delta}$ such that for all $t\in I$, we have for any
    $\varepsilon\in (0,1]$
$$\left|\frac{\rm d}{\dt}\left(\int_{\R}
    u^\varepsilon(x,t)\varphi_\delta(x)\dx\right)\right|
\le C_{I,\delta}.$$
Now, for any $t,t+s\in I$, we get
$$\left|\int_{\R}
    u^\varepsilon(x,t+s)\varphi_\delta(x)\dx - \int_{\R}
    u^\varepsilon(x,t)\varphi_\delta(x)\dx \right| \le |s| C_{I,\delta}.$$
Therefore, the following estimate
\begin{equation*}
\begin{split}
&|u^\varepsilon(0,t+s)-u^\varepsilon(0,t)|\\
&
\le |s| C_{I,\delta} 
+ \int_{\R}\varphi_\delta(x)\dx \cdot 
 \sup_{x\in[-\delta,\delta]} \big(
  |u^\varepsilon(x,t+s)-u^\varepsilon(0,t+s)| 
+ |u^\varepsilon(x,t)-u^\varepsilon(0,t)|  \big)
\end{split}
\end{equation*}
holds true. 
Using the H\"{o}lder estimate (\ref{eq::holder}), we deduce that there
exists a constant $C_I$ depending on $I$, but independent of $\delta$ and
of $\varepsilon \in (0,1]$, such that
$$|u^\varepsilon(0,t+s)-u^\varepsilon(0,t)|\le |s| C_{I,\delta} + C_I
\delta^\beta.$$
Since the above inequality is true for any $\delta$, this shows the existence
of a modulus of continuity $\omega_I$ satisfying
$$|u^\varepsilon(0,t+s)-u^\varepsilon(0,t)|\le \omega_I(|s|) \quad
\mbox{for any}\quad t,t+s\in I.$$
By the translation invariance of the problem, this estimate is indeed true
for any $x\in \R$, i.e.
\begin{equation}\label{eq::holdert}
|u^\varepsilon(x,t+s)-u^\varepsilon(x,t)|\le \omega_I(|s|) \quad
\mbox{for any}\quad t,t+s\in I,\quad x\in\R.   
\end{equation}

 {\it Step 3: Convergence as $\varepsilon\to 0^+$.}
From estimates (\ref{eq::holder}) and (\ref{eq::holdert}), and using the 
Ascoli--Arzel\`a theorem and  the Cantor diagonal argument, 
we deduce that there exists a subsequence (still
denoted  $\{u^\varepsilon\}_\varepsilon$) which converges to a limit $u\in
C(\R\times (0,+\infty))$.  By the stability result in Theorem \ref{th:stab}
(see also Remark \ref{remark:plus}), 
we have that $u$ is a viscosity solution of (\ref{eq}) on
$\R\times (0,+\infty)$.
\medskip

 {\it Step 4: Checking the initial conditions for $u_0$ smooth.}
Remark that for $u_0\in W^{2,\infty}$ we can use the barriers given in
(\ref{eq::200}) with some constant $C>0$ uniform in $\varepsilon\in (0,1]$.
This ensures that $u$ is continuous up to $t=0$ and
satisfies
$u(\cdot,0)=u_0,$
so this proves the result under additional assumptions.
\medskip

 {\it Step 5: General case.}
The proof in the case of less regular initial conditions
simply follows by an approximation argument as was in the
proof of Theorem \ref{th:exis}. 
\medskip

{\it Step 6: Gradient estimates.} To pass to the limit 
$\varepsilon\to 0^+$ in estimates \rf{Lp:estbis}, we use 
the inequality
\begin{equation}\label{eq:h}
h^{-1}\|u^\varepsilon(\cdot+h,t)-u^\varepsilon(\cdot,t)\|_p
\leq \|u^\varepsilon_x(\cdot,t)\|_p 
\end{equation}
with fixed $h>0$. Hence, by the Fatou lemma combined with the pointwise
convergence of $u^\varepsilon$ toward $u$, we deduce from \rf{eq:h} and
\rf{Lp:estbis} that
 \begin{equation*}\label{eq:h2}
h^{-1}\|u(\cdot+h,t)-u(\cdot,t)\|_p 
\leq C_{p,\alpha} \|u_{0,x}\|_1^{\frac{p\alpha+1}{p(\alpha+1)}}
t^{-\frac{1}{\alpha+1}\left(1-\frac{1}{p}\right)}
\end{equation*}
for all $h>0$. For every fixed $t>0$, the sequence 
$\{h^{-1}(u(\cdot+h,t)-u(\cdot,t))\}_{h>0}$ is bounded
in $L^p(\R)$ and converges (up to a subsequence) weakly in $L^p(\R)$
toward $u_x(\cdot,t)$ (see, {e.g.}, \cite[Ch.~V, Prop.~3]{St}). 
Using the well-known property of a weak convergence
in Banach spaces we conclude
$$
\|u_x(\cdot,t)\|_p\leq \liminf_{h\to 0^+}
h^{-1}\|u(\cdot+h,t)-u(\cdot,t)\|_p 
\leq C_{p,\alpha} \|u_{0,x}\|_1^{\frac{p\alpha+1}{p(\alpha+1)}}
t^{-\frac{1}{\alpha+1}\left(1-\frac{1}{p}\right)}.
$$
This finishes the proof of Theorem~\ref{th:m:4}. 
\end{proof}

\bigskip
\bigskip


\bibliographystyle{amsplain}

\end{document}